\numberwithin{equation}{section}
\theoremstyle{plain}
\newtheorem{thm}{Theorem}[section]
\newtheorem{theorem}[thm]{Theorem}
\newtheorem{lemma}[thm]{Lemma}
\newtheorem{prop}[thm]{Proposition}
\newtheorem{cor}[thm]{Corollary}
\newcommand\theoref{Theorem~\ref}
\newcommand\lemref{Lemma~\ref}
\newcommand\propref{Proposition~\ref}
\newcommand\corref{Corollary~\ref}
\def\secref{Section~\ref}
\theoremstyle{definition}
\newtheorem{definition}[thm]{Definition}
\newtheorem{rem}[thm]{Remark}
\newtheorem{remark}[thm]{Remark}
\newtheorem{question}[thm]{Question}
\DeclareMathOperator{\cat}{{\mbox{\rm cat$_{\rm LS}$}}}
\DeclareMathOperator{\syscat}{{\rm cat_{sys}}}
\DeclareMathOperator{\cd}{{\rm cd}}
\DeclareMathOperator{\stsys}{{\rm stsys}}
\DeclareMathOperator{\sys}{{\rm sys}}
\DeclareMathOperator{\sysh}{{\rm sysh}}
\DeclareMathOperator{\pisys}{{\rm sys}\pi}
\DeclareMathOperator{\vol}{{\rm vol}}
\def\ga{\alpha}
\def\gb{\beta}
\def\C{{\mathbb C}}
\def\Z{{\mathbb Z}}
\def\R{{\mathbb R}}
\def\N{{\mathbb N}}
\def\ds{\displaystyle}
\def\1{\hbox{\rm\rlap {1}\hskip.03in{\rom I}}}
\def\Bbbone{{\rm1\mathchoice{\kern-0.25em}{\kern-0.25em}
{\kern-0.2em}{\kern-0.2em}I}}
\def\ds{\displaystyle}
\def\ov{\overline}
\def\la{\langle}
\def\ra{\rangle}
\def\cupr{\smallsmile}
\long\def\forget#1\forgotten{} %
\newcommand{\gmetric}{{\mathcal G}}
\newcommand\gm{\gmetric}
\newcommand{\regular}{{B}}
\newcommand\ver[1]{\marginpar{\tiny Changed in Ver \VER}}
\newcommand\T {{\mathbb T}}
\newcommand\AJ {{\mathcal A}}
\DeclareMathOperator{\fmanifold}{{[\overline{\it F}_{\it \! M}]}}
\newcommand\manbar {{\overline{{M}}}}
\newcommand{\surface}{W}
\date{\today}
\begin{document}

\title[Cohomological dimension, self-linking, and systoles]
{Cohomological dimension, self-linking, and systolic geometry}

\author[A.~Dranishnikov]{Alexander N. Dranishnikov$^{1}$} %
\thanks{$^{1}$Supported by the NSF, grant DMS-0604494}

\author[M.~Katz]{Mikhail G. Katz$^{2}$} %
\thanks{$^{2}$Supported by the Israel Science Foundation (grants 84/03
and 1294/06) and the BSF (grant 2006393)}

\author[Yu.~Rudyak]{Yuli B. Rudyak$^{3}$}%
\thanks{$^{3}$Supported by the NSF, grant 0406311}

\address{Alexander N. Dranishnikov, Department of Mathematics, University
of Florida, 358 Little Hall, Gainesville, FL 32611-8105, USA}
\email{dranish@math.ufl.edu}

\address{Mikhail G. Katz, Department of Mathematics, Bar Ilan
University, Ramat Gan 52900 Israel}

\email{katzmik ``at'' math.biu.ac.il}

\address{Yuli B. Rudyak, Department of Mathematics, University
of Florida, 358 Little Hall, Gainesville, FL 32611-8105, USA}
\email{rudyak@math.ufl.edu}

\subjclass[2000]
{Primary
53C23;     
Secondary
55M30, 
57N65
}

\keywords{Category weight, cohomological dimension, detecting element,
Eilenberg--Ganea conjecture, essential manifolds, free fundamental
group, Lusternik--Schnirelmann category, Massey product, self-linking
class, systolic category}

\begin{abstract}
Given a closed manifold~$M$, we prove the upper bound of
\begin{equation*}
\tfrac{1}{2} (\dim M+ \cd(\pi_1 M))
\end{equation*}
for the number of systolic factors in a curvature-free lower bound for
the total volume of~$M$, in the spirit of M.~Gromov's systolic
inequalities.  Here ``$\cd$'' is the cohomological dimension.  We
apply this upper bound to show that, in the case of a~$4$-manifold,
the Lusternik--Schnirelmann category is an upper bound for the
systolic category.  Furthermore we prove a systolic inequality on a
manifold~$M$ with~$b_1(M)=2$ in the presence of a nontrivial
self-linking class of a typical fiber of its Abel--Jacobi map to
the~$2$-torus.
\end{abstract}

\maketitle

\tableofcontents

\section{Systolic inequalities and LS category}

A quarter century ago, M.~Gromov \cite{Gr1} initiated the modern
period in systolic geometry by proving a curvature-free~$1$-systolic
lower bound for the total volume of an essential Riemannian
manifold~$M$ of dimension~$d$, i.e.~a~$d$-fold product of the systole
is such a lower bound.

Here the term ``curvature-free'' is used in the literature to refer to
a bound independent of curvature invariants, with a constant depending
on the dimension (and possibly on the topology), but not on the
geometry (i.e.~the Riemannian metric).  Note that such bounds cannot
be called ``metric-independent'' as the systolic invariants themselves
do depend on the metric.

Recently, M.~Brunnbauer~\cite{Bru3} proved that a~$(k-1)$-connected
manifold of dimension~$n=kd$ satisfies a curvature-free
stable~$k$-systolic inequality~$(\stsys_k)^d \leq C \vol_n$ if and
only if a purely homotopic condition on the image of the fundamental
class~$[M]$ in a suitable classifying space is satisfied.  Thus the
total volume admits a lower bound in terms of a systolic product
with~$d$ factors.  J.~Strom~\cite{S2} and the first named
author~\cite{D} were motivated by investigations of the
Lusternik-Schnirelmann (LS) category \cite{DKR} in its relation to
systolic geometry~\cite{KR1}.  A.~Costa and M.~Farber \cite{CF} have
pursued the direction initiated in \cite{D}, as applied to motion
planning--related complexity.

Our first result links systolic inequalities to the cohomological
dimension (see \cite{Bro}) of the fundamental group, see
\theoref{t:cd} for a more precise statement.

\begin{theorem}
\label{aaa}
Let~$M$ be a closed~$n$-manifold, with~$\cd(\pi_1 M)=d \leq n$.  Then
there can be no more than
\[
\frac{n+d}{2}
\]
systolic factors in a product which provides a curvature-free lower
bound for the total volume of~$M$.
\end{theorem}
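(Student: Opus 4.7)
The plan is to classify the systolic factors of any hypothetical inequality by their relation to $\pi_1(M)$, and combine this classification with the cohomological dimension hypothesis.

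Suppose we have a curvature-free systolic product inequality
\begin{equation*}
\sys_{k_1}(\gm) \cdots \sys_{k_s}(\gm) \;\le\; C\,\vol(M,\gm),
\end{equation*}
where each $\sys_{k_i}$ stands for a systolic invariant (in the sense of $\sys$, $\stsys$, or $\pisys$) in dimension $k_i$, and necessarily $\sum_i k_i = n$. Let $f\colon M \to B\pi_1 M$ denote the classifying map.

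\emph{Step 1: translation to cohomology.} The first step is to invoke the bridge between systolic inequalities and cup-length data developed in \cite{KR1,DKR} (following Gromov and Babenko): the existence of such a product inequality forces the existence of cohomology classes $u_1,\dots,u_s$ with $\deg u_i = k_i$, defined on appropriate target spaces $X_i$ equipped with maps from~$M$, such that the cup product of the pullbacks is nontrivial on~$[M]$.

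\emph{Step 2: classification.} Call the $i$-th factor of \emph{$\pi_1$-type} if the class $u_i$ factors through $f$. Since $H^1(M;A) = H^1(B\pi_1 M; A)$ for any coefficient group~$A$, every factor with $k_i = 1$ is automatically of $\pi_1$-type; equivalently, any factor not of $\pi_1$-type must satisfy $k_i \ge 2$.

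\emph{Step 3: bounding the $\pi_1$-type factors.} Let $p$ denote the number of $\pi_1$-type factors and $a$ the sum of their dimensions. Their cup product lies in $H^a(B\pi_1 M)$, which vanishes when $a > d = \cd(\pi_1 M)$. Since the total cup product is nonzero, so is the $\pi_1$-sub-product, forcing $a \le d$ and hence $p \le a \le d$.

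\emph{Step 4: combining the counts.} The remaining $q = s - p$ factors have dimensions summing to $n - a$, each at least~$2$ by Step~2, so $q \le (n-a)/2$. Therefore
\begin{equation*}
s \;=\; p + q \;\le\; p + \tfrac{n-a}{2} \;\le\; p + \tfrac{n-p}{2} \;=\; \tfrac{n+p}{2} \;\le\; \tfrac{n+d}{2},
\end{equation*}
as required.

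\emph{Main obstacle.} The most delicate ingredient is Step~1: rigorously converting the systolic inequality into cup-length data \emph{while preserving the information of which classes factor through the classifying map}. The standard Babenko-style approach is contrapositive --- one constructs explicit test metrics on~$M$ by collapsing along fibers of $f$, exploiting that $\cd(\pi_1 M) = d$ provides a $d$-dimensional model for $B\pi_1 M$ (with the usual caution at $d = 2$ due to Eilenberg--Ganea). Any putative inequality lacking the cohomological support described above would be violated by such metrics, since a $d$-dimensional target cannot support enough cup-length to match $s > (n+d)/2$ factors.
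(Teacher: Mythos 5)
Your Step 4 arithmetic is exactly the paper's closing count, and your ``Main obstacle'' paragraph correctly senses where the real work lies --- but Step 1, on which Steps 2 and 3 depend, is a genuine gap. You assert that any curvature-free product inequality $\sys_{k_1}\cdots\sys_{k_s}\le C\vol$ ``forces the existence of cohomology classes $u_1,\dots,u_s$ \dots\ such that the cup product of the pullbacks is nontrivial on $[M]$.'' No such converse is available in this generality. The known implications run the other way (cup-length data implies stable systolic inequalities, as in Theorem~\ref{t:cuplength}); the converse is known only in special situations, e.g.\ Brunnbauer's result for stable $k$-systoles of $(k-1)$-connected manifolds. Indeed, if your Step~1 held as stated, it would essentially give $\syscat M\le\cat M$ in general, which the paper explicitly records as open. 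So the classification of factors into ``$\pi_1$-type'' and the vanishing of $H^{a}(B\pi_1M)$ for $a>d$ have nothing to act on: you never legitimately produce the classes $u_i$.

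The paper's proof needs much less and gets it by a direct metric construction. Taking $f\colon M\to B$ with $\dim B\le d$ (Lemma~\ref{l:classif}, with the Eilenberg--Ganea caveat at $d=2$ handled by retracting to the $2$-skeleton), one forms $g_t=g_M+t^2f^*(g_B)$. The pullback form has rank at most $d$, so $\vol(M,g_t)$ grows at most like $t^d$, while $\sys_1(M,g_t)$ grows like $t$ and the $k$-systoles for $k\ge2$ stay bounded below thanks to the background metric $g_M$. Hence $\sys_1^{d+1}/\vol\to\infty$ and no categorical partition can contain more than $d$ ones; since every other part is at least $2$, the size is at most $(n+d)/2$ --- your Step~4 verbatim. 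Note that the only thing that needs bounding is the number of parts equal to $1$ (your own Step~2 observes that non-$\pi_1$-type factors have $k_i\ge2$, which is really all you use), so the cohomological bridge of Step~1 is not only unavailable but unnecessary. To repair your write-up, replace Steps 1--3 by the statement ``at most $d$ of the $k_i$ can equal $1$'' and prove it by the degeneration above.
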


It was shown in~\cite{KR1, KR2} that the maximal number of factors in
such a product coincides with the LS category~$\cat$ in a number of
cases, including all manifolds of dimension~$\leq 3$.  We apply
Theorem~\ref{aaa} to show that, in dimension~$4$, the number of
factors is bounded by the LS category, see \corref{c:dim=4} for a more
precise statement.

\begin{theorem}
\label{bbb}
For every closed orientable~$4$-manifold, the maximal number of
factors in a product of systoles which provides a curvature-free lower
bound for the total volume, is bounded above by~$\cat M$.
\end{theorem}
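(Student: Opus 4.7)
The plan is to apply Theorem~\ref{aaa} and finish by a case analysis on $d := \cd(\pi_1 M)$. Since $\dim M = 4$, the theorem bounds the number of systolic factors in any curvature-free product for $\vol_4 M$ by $\lfloor (4+d)/2 \rfloor$, so it suffices to establish $\cat M \geq \lfloor (4+d)/2 \rfloor$ whenever $d$ is finite, with the case $\cd(\pi_1 M) = \infty$ (i.e., $\pi_1 M$ has torsion) treated separately.

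The extreme cases are easy. For $d = 0$, $M$ is simply connected, and either $M$ is a homotopy $4$-sphere with $\cat M = 1$ and $\syscat M = 1$, or $\cat M = 2$ matching the bound $\lfloor 4/2 \rfloor = 2$. For $d = 1$, $\pi_1 M$ is a nontrivial free group, so a generator of $H^1(M;\Z)$ forces $\cat M \geq 2$, matching $\lfloor 5/2 \rfloor = 2$. For $d = 4$, the hypothesis $\cd(\pi_1 M) = \dim M$ implies that $M$ is essential (the classifying map $f \colon M \to K(\pi_1 M, 1)$ sends $[M]$ to a nonzero class, since $K(\pi_1 M, 1)$ admits a $4$-dimensional model), so $\cat M = 4$ by the standard fact that essential $n$-manifolds have LS category equal to $n$, matching $\lfloor 8/2 \rfloor = 4$.

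The main obstacle lies in the middle cases $d = 2$ and $d = 3$, where one must show $\cat M \geq 3$. My strategy here is to exploit the classifying map $f \colon M \to K(\pi_1 M, 1)$ together with the Eilenberg--Ganea identity $\cat K(\pi,1) = \cd(\pi)$ to transfer enough multiplicative structure from the target to $M$: a nontrivial $d$-fold cup product in $H^\ast(K(\pi_1 M, 1))$ (with suitable, possibly twisted, coefficients) pulls back via $f^\ast$ to a nonzero cup product of length $d$ on $M$. Because $d < 4 = \dim M$, Poincar\'e duality on the closed orientable manifold $M$ then supplies a complementary class whose cup product with the above is nonzero in the top degree, promoting the cup length on $M$ to at least $d + 1 \geq 3$, hence $\cat M \geq 3$.

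For fundamental groups with torsion, where Theorem~\ref{aaa} does not directly apply, I would fall back on the universal estimate $\cat M \leq \dim M = 4$ and handle $\syscat M$ by an independent argument, for instance via a torsion-free finite-index cover combined with the multiplicativity of systolic invariants under coverings. Assembling all cases yields the inequality $\syscat M \leq \cat M$, the full sharpened statement being recorded in Corollary~\ref{c:dim=4}.
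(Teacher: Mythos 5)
Your reduction via Theorem~\ref{aaa} only carries the cases $d=0,1$, and the argument breaks down exactly where the real content of the theorem lies. The claim that $\cd(\pi_1M)=4$ forces $M$ to be essential is false: let $M=\partial W$ where $W$ is a regular neighborhood in $\R^5$ of the $2$-skeleton of $\T^4$; then $\pi_1M\cong\Z^4$ (the spine has codimension $3$), so $\cd(\pi_1M)=4$, yet the classifying map factors through a $2$-complex and $f_*[M]=0$. More fundamentally, the strategy of proving $\cat M\geq (4+d)/2$ cannot succeed for $d=3$ or $d=4$, because that inequality is simply not true: surgery on $\T^4$ along a circle generating one $\Z$-factor yields a closed orientable $4$-manifold with $\pi_1\cong\Z^3$, hence $d=3$, which is non-essential and therefore has $\cat M=3<4$. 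The cup-product transfer is also unsound: $\cd(\pi)=d$ guarantees a nonzero class in $H^d(\pi;A)$ for some $\pi$-module $A$, not a nonzero $d$-fold product of lower-degree classes (for $\pi=BS(1,2)$ one has $\cd(\pi)=2$ but $H^2(\pi;\Z)=0$), and $f^*$ need not be injective in degrees $\geq 3$, so a product upstairs can die in $M$. Finally, in the torsion case the estimate $\cat M\leq 4$ points in the wrong direction --- you need a \emph{lower} bound on $\cat M$ or an upper bound on $\syscat M$ --- and not every finitely presented group admits a torsion-free finite-index subgroup, so that fallback has no content.

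The paper's proof of \corref{c:dim=4} rests on two external theorems that your proposal is missing. If $\pi_1M$ is free, then $\cat M\leq 2$ by Matumoto--Katanaga \cite{MK} while $\syscat M\leq 2$ by \corref{c:free}; this is the only place Theorem~\ref{aaa} is used, and it corresponds to your $d=0,1$ cases. If $\pi_1M$ is \emph{not} free --- whether $\cd(\pi_1 M)$ equals $2$, $3$, $4$ or is infinite --- then $\cat M\geq 3$ by the main result of \cite{DKR}; combined with the fact that $\syscat M=4$ occurs only for essential $M$, in which case $\cat M=4$ as well, this yields $\syscat M\leq\cat M$ in all remaining cases. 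The implication ``$\pi_1M$ not free $\Rightarrow\cat M\geq3$'' is a substantial theorem and is not recoverable from cup-length considerations; without it your middle cases $d=2,3$ and the torsion case have no proof.
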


Combining Theorem~\ref{aaa} with a volume lower bound resulting from
an inequality of Gromov's (see Sections~\ref{s:def} and
\ref{s:abjac}), we obtain the following result (typical examples are
$\T^2 \times S^3$ as well as the non-orientable~$S^3$-bundle over
$\T^2$).

\begin{theorem}
\label{ccc}
Let~$M$ be a closed~$5$-dimensional manifold.  Assume that~$b_1( M)=
\cd(\pi_1 M)=2$ and furthermore that the typical fiber of the
Abel--Jacobi map to~$\T^2$ represents a nontrivial homology class.
Then the maximal possible number of factors in a systolic lower bound
for the total volume is~$3$.  Note also that~$3\le \cat M \le 4$.
\end{theorem}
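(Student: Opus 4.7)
The plan is to prove the equality $\syscat M = 3$ by matching upper and lower bounds, and then to establish the bounds $3 \le \cat M \le 4$ using essentially the same geometric input.

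\emph{Upper bound on the number of systolic factors.} Since $\dim M = 5$ and $\cd(\pi_1 M) = 2$, Theorem~\ref{aaa} yields the ceiling $(5+2)/2 = 7/2$, hence at most $3$ factors in any curvature-free systolic product lower bound for $\vol_5(M)$.

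\emph{Lower bound on the number of systolic factors.} I would invoke the Abel--Jacobi map $\phi: M \to \T^{b_1(M)} = \T^2$, available because $b_1(M) = 2$. Gromov's filling-type inequality associated to $\phi$ (as developed in Sections~\ref{s:def} and~\ref{s:abjac}) bounds $\vol_5(M)$ below by a universal constant times $\stsys_1(M)^{b_1(M)} \cdot \vol(F)$, where $F$ denotes the typical fiber over a regular value. Since by hypothesis $[F]$ is a nontrivial class in $H_3(M)$, one has $\vol(F) \ge \sys_3(M)$, giving the three-factor inequality
\[
\stsys_1(M)^2 \cdot \sys_3(M) \le C\,\vol_5(M),
\]
and therefore $\syscat M \ge 3$. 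Combined with the previous step this forces $\syscat M = 3$.

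\emph{Category bounds.} For $\cat M \ge 3$ I would exhibit a triple cup product. The nontriviality of $[F]$ means, by Poincar\'e duality (with $\Z$-coefficients in the orientable case and $\Z/2$ otherwise), that $\phi^*(\alpha) \cupr \phi^*(\beta) \ne 0$ in $H^2(M)$ for generators $\alpha,\beta \in H^1(\T^2)$. A second application of Poincar\'e duality in dimension~$5$ then produces $\gamma \in H^3(M)$ with $\phi^*\alpha \cupr \phi^*\beta \cupr \gamma \ne 0$, so $\cuplength(M) \ge 3$ and $\cat M \ge 3$. For $\cat M \le 4$ I would invoke the classical fact that $\cat M = \dim M$ for a closed manifold forces asphericity: since $\cd(\pi_1 M) = 2 \ne 5 = \dim M$, our $M$ is not aspherical and hence $\cat M \le 4$.

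The principal obstacle I foresee is pinning down the precise Gromov-type estimate and verifying that the integer $3$-systole $\sys_3(M)$ (rather than a weaker or stable variant) is the factor produced by the nontriviality of $[F]$; this requires care with the conventions used in the paper. Once that geometric step is in hand, both the systolic equality and the category inequalities follow as above.
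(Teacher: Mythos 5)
Your treatment of the systolic claim is essentially the paper's own argument: the upper bound of $3$ factors comes from Theorem~\ref{aaa} via $(5+2)/2=7/2$, and the lower bound comes from Gromov's Abel--Jacobi inequality, packaged in the paper as Proposition~\ref{p:fm}, which gives $\syscat M\ge b_1(M)+1=3$ for the partition $5=1+1+3$. The point you flag as needing care is real but harmless: Theorem~\ref{t:abcover} bounds the volume by $\vol(\T^2)$ times the infimal area of cycles representing the fiber class \emph{in the maximal free abelian cover} $\manbar$, not by the volume of the actual fiber in $M$; this infimum dominates $\sys_3(M)$ precisely because Definition~\ref{21b} takes an infimum over regular covers, and nontriviality of $[F_M]$ in $M$ does imply nontriviality of $\fmanifold$ in $\manbar$, since each lift maps homeomorphically onto $F_M$ under the covering projection.

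The genuine gaps are in your category bounds. First, the ``classical fact'' that $\cat M=\dim M$ forces asphericity is false: $\R P^n$ has $\cat=n$ but is not aspherical. The correct statement, from \cite{KR1}, is that $\cat M=\dim M$ forces $M$ to be \emph{essential}; since $\cd(\pi_1M)=2$, the classifying space has a model of dimension at most $3$, so $H_5(B\pi_1M)=0$ with any coefficients, $M$ is inessential, and $\cat M\le 4$ follows --- your conclusion is right but not for the reason you give. Second, your cup-length argument for $\cat M\ge 3$ needs the Poincar\'e duality pairing to produce $\gamma\in H^3(M)$ with $\phi^*\alpha\cupr\phi^*\beta\cupr\gamma\ne 0$; that pairing is perfect only over a field (over $\Z$ the cup product into $H^5(M;\Z)\cong\Z$ kills torsion), and the hypothesis only guarantees $[F]\ne 0$ in $H_3(M;\Z)$ --- if that class is torsion, its image over the field you need may vanish and the triple product argument collapses. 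The robust route, implicit in the paper (compare the proof of Corollary~\ref{c:dim=4}), is that $\cd(\pi_1M)=2$ forces $\pi_1M$ to be non-free by the Stallings--Swan theorem, and a closed manifold with non-free fundamental group has $\cat\ge 3$ by \cite{DKR}; this uses neither orientability nor the fiber-class hypothesis.
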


The above result motivates the following question concerning upper
bounds for the Lusternik-Schnirelmann category, cf.~\cite{D}.

\begin{question}
Under the hypotheses of Theorem~\ref{ccc}, is the
Lusternik-Schnirelmann category of $M$ necessarily equal to $3$?
\end{question}

It will be convenient to formulate all of the above results in terms
of the systolic category.  The idea of systolic category is to codify
lower bounds for the total volume, in terms of lower-dimensional
systolic invariants.  We think of it as an elegant way of expressing
systolic statements.  Here we wish to incorporate all possible
curvature-free systolic inequalities, stable or unstable.  More
specifically, we proceed as follows.

\begin{definition}
\label{21b}
Given~$k\in \N, k>1$ we set
\begin{equation*}
\ \sys_{k}(M, \gmetric)= \inf \left\{\sysh_k^{\phantom{I}}(M',
\gmetric;A), \stsys_k(M, \gmetric) \right\},
\end{equation*}
where $\sysh$ is the homology systole, $\stsys$ is the stable homology
systole, and the infimum is over all regular covering spaces~$M'$
of~$M$, and over all choices
\begin{equation}
\label{41}
A\in \{ \Z, \Z_2 \} .
\end{equation}
Furthermore, we define
\begin{equation*}
\sys_{1}(M, \gmetric)=\min \{\pisys_1(M, \gmetric),\stsys_1(M,
\gmetric)\}.
\end{equation*}
\end{definition}

Note that the systolic invariants thus defined are nonzero~\cite{KR2}.

\begin{definition}
Let~$M$ be a closed~$n$-dimensional manifold.  Let~$d\geq 1$ be an
integer.  Consider a partition
\begin{equation}
\label{eq:partition}
n= k_1 + \ldots + k_d,\quad k_1\le k_2\le \cdots \le k_d
\end{equation}
where~$k_i\geq 1$ for all~$i=1,\ldots, d$.  We say that the partition
(or the~$d$-tuple~$(k_1, \ldots, k_d)$) is {\em categorical} for~$M$
if the inequality
\begin{equation}
\label{eq:main}
\sys_{k_1}(\gmetric) \sys_{k_2}(\gmetric) \ldots \sys_{k_d}(\gmetric)
\leq C(M) \vol_n(\gmetric)
\end{equation}
is satisfied by all metrics~$\gmetric$ on~$M$, where the
constant~$C(M)$ is expected to depend only on the topological type
of~$M$, but not on the metric~$\gmetric$.
\end{definition}

The {\em size\/} of a partition is defined to be the integer~$d$.

\begin{definition}
The {\em systolic category} of~$M$, denoted~$\syscat(M)$, is the
largest size of a categorical partition for~$M$.
\end{definition}

In particular, we have~$\syscat M \le \dim M$.

We know of no example of manifold whose systolic category exceeds its
Lusternik-Schnirelmann category.  The lower bound of~$b_1(M)+1$ for the
systolic category of a manifold~$M$ with non-vanishing fiber class in
the free abelian cover of~$M$, discussed in Section~\ref{s:abjac},
therefore inspires the following question.

\begin{question}
Is the non-vanishing of the fiber class in the free abelian cover
of~$M$, a sufficient condition to guarantee a lower bound of
$b_1(M)+1$ for the Lusternik-Schnirelmann category of~$M$?
\end{question}

The answer is affirmative if the fiber class can be represented as a
Massey product, see Section~\ref{s:abjac}.

The paper is organized as follows.  In Sections \ref{s:prel} and
\ref{four}, we review the notion of systolic category.  In
\secref{s:group}, we obtain an upper bound for the systolic category
of a closed~$n$-manifold in terms of its fundamental group and, in
particular, prove that the systolic category of a 4-manifold with free
fundamental group does not exceed~$2$ (\corref{c:free}).

In \secref{s:abjac} we investigate the next possible value of the
categories, namely~$3$.  We recall a 1983 result of M.~Gromov's
related to Abel--Jacobi maps, and apply it to obtain a lower bound
of~$3$ for the systolic category for a class of manifolds defined by a
condition of non-trivial self-linking of a typical fiber of the
Abel--Jacobi map. In fact, non-triviality of the self-linking class
guarantees the homological non-triviality of the typical fiber lifted
to the free abelian cover.
%
%

Marcel Berger's monograph \cite[ pp.~325-353]{Be6} contains a detailed
exposition of the state of systolic affairs up to '03.  More recent
developments are covered in \cite{SGT}.

Recent publications in systolic geometry include the articles
\cite{AK, Be08, Bru, Bru2, Bru3, BKSW, DKR, DR09, EL, HKU, KK, KK2,
KW, Ka4, KSh, RS, Sa08}.

\section{A systolic introduction}
\label{s:prel}

Let~$\T^2$ be a 2-dimensional torus equipped with a Riemannian
metric~$\gm$.  Let~$A$ be the area of~$(\T^2,\gm)$.  Let~$\ell$ be the
least length of a noncontractible loop in~$(\T^2,\gm)$.  What can one
say about the scale-invariant ratio~$\ell^2/A$?  It is easy to see
that the ratio can be made arbitrarily small for a suitable choice of
metric~$\gm$.  On the other hand, it turns out that the ratio is
bounded from above.  Indeed, C.~Loewner proved that~$\ell^2/A\le
2/\sqrt 3$, for all metrics~$\gm$ on~$\T^2$, see~\cite{Pu}.

More generally, given a closed~$n$-dimensional smooth manifold~$M$
with non-trivial fundamental group, M. Berger and M. Gromov asked
whether there exists a constant~$C>0$ such that the inequality
\begin{equation}\label{e:gromov}
\ell^n\le C\vol(M)=C\vol_n(M,\gm)
\end{equation}
holds for all Riemannian metrics~$\gm$ on~$M$; here~$\ell$ is the
least length of a noncontractible loop in~$M$, while~$C$ is required
to be independent of the metric~$\gm$.  Indeed, Gromov~\cite{Gr1}
proved that such a~$C=C_n$ exists if~$M$ is an essential manifold,
meaning that~$M$ represents a nonzero homology class
in~$H_n(\pi_1(M))$.  I.~Babenko~\cite{Bab1} proved a converse.

We generalize these invariants as follows.  Let~$h_k$ denote the
minimum of~$k$-volumes of homologically nontrivial~$k$-dimensional
cycles (see Section~\ref{s:prel} for details).  Do there exist a
partition~$k_1+ \cdots +k_d=n$ of~$n$, and a constant~$C$ such that
the inequality
\begin{equation}\label{e:syscat}
\prod_{i=1}^d h_{k_i}\le C\vol(M, \gm)
\end{equation}
holds for all Riemannian metrics~$\gm$ on~$M$?  The invariants~$h_k$
are the~$k$-systoles, and the maximum value of~$d$ as in
\eqref{e:syscat} is called the {\em systolic category\/}~$\syscat$
of~$M$ \cite{KR1,SGT}.  The goal of this work is to continue the
investigation of the invariant~$\syscat$ started in \cite{KR1, KR2}.

%
%

It was originally pointed out by Gromov (see \cite{Be5}) that this
definition has a certain shortcoming.  Namely, for~$S^1\times S^3$ one
observes a ``systolic freedom'' phenomenon, in that the inequality
\begin{equation}\label{e:freedom}
\sysh_1\sysh_3\le C\vol(S^1\times S^3)
\end{equation}
is violated for any~$C\in \R$, by a suitable metric~$\gm$
on~$S^1\times S^3$ \cite{Gr2,Gr3}.  This phenomenon can be overcome by
a process of stabilisation.

It turns out that the difference between the stable
systoles,~$\stsys_k$, and the ordinary ones,~$\sysh_k$ has significant
ramifications in terms of the existence of geometric inequalities.
Namely, in contrast with the violation of \eqref{e:freedom}, the
inequality
\begin{equation}
\label{27}
\stsys_1\stsys_3\le \vol_4(S^1\times S^3)
\end{equation}
holds for all metrics on~$S^1\times S^3$ \cite{Gr1} (see
Theorem~\ref{t:cuplength} for a generalisation).

\section{Gromov's inequalities}
\label{s:def}
\label{four}

Systolic category can be thought of as a way of codifying three
distinct types of systolic inequalities, all due to Gromov.  There are
three main sources of systolic lower bounds for the total volume of a
closed manifold~$M$.  All three originate in Gromov's 1983 Filling
paper~\cite{Gr1}, and can be summarized as follows.

\begin{enumerate}
\item
Gromov's inequality for the homotopy~$1$-systole of an essential
manifold~$M$, see \cite{We, Gu09} and \cite[p.~97]{SGT}.
\item
Gromov's stable systolic inequality (treated in more detail in
\cite{BK1, BK2}) corresponding to a cup product decomposition of the
rational fundamental cohomology class of~$M$, see
\theoref{t:cuplength}.
\item
A construction using the Abel--Jacobi map to the Jacobi torus of~$M$
(sometimes called the dual torus), also based on a theorem of Gromov
(elaborated in \cite{IK, BCIK2}).
\end{enumerate}

Let us describe the last construction in more detail.  Let~$M$ be a
connected~$n$-manifold.  Let~$b=b_1(M)$.  Let
\begin{equation*}
\T^b := H_1(M;\R)/H_1(M;\Z)_\R
\end{equation*}
be its Jacobi torus.  A natural metric on the Jacobi torus of a
Riemannian manifold is defined by the stable norm, see
\cite[p.~94]{SGT}.

The Abel--Jacobi map~$\AJ_M: M\to \T^b$ is discussed in \cite{Li,
BK2}, cf.~\cite[p.~139]{SGT}.  A typical fiber~$F_M\subset M$
(i.e.~inverse image of a generic point) of~$\AJ_M$ is a smooth
imbedded~$(n-b)$-submanifold (varying in type as a function of the
point of~$\T^b$).  Our starting point is the following observation of
Gromov's \cite[Theorem~7.5.B]{Gr1}, elaborated in \cite{IK}.

\begin{theorem}[M.~Gromov]
\label{t:abcover}
If the homology class~$\fmanifold\in H_{n-b}(\ov M)$ of the lift
of~$F_M$ to the maximal free abelian cover~$\ov M$ of~$M$ is nonzero,
then the total volume of~$M$ admits a lower bound in terms of the
product of the volume of the Jacobi torus and the infimum of areas of
cycles representing the class~$\fmanifold$.
\end{theorem}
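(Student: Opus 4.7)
The plan is to deduce this from a coarea inequality applied to the Abel--Jacobi map, using the stable norm to control its Lipschitz constant and using the hypothesis on the lifted fiber class to control the size of the fibers from below.

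First, I would equip $\T^b$ with the stable norm coming from the metric $\gm$ on $M$. A classical fact (due to Gromov, cf.~\cite{BK2,SGT}) is that, after replacing $\AJ_M$ by a homotopic smooth map built from harmonic $1$-forms representing an integral basis of $H^1(M;\Z)$, one obtains an Abel--Jacobi map which is $1$-Lipschitz with respect to this stable-norm flat metric on the torus. This preparation is essential so that the coarea formula yields a useful comparison between $\vol_n(M,\gm)$ and the volumes of fibers.

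Next I would invoke the coarea inequality for the $1$-Lipschitz map $\AJ_M\colon M\to \T^b$:
\begin{equation*}
\vol_n(M,\gm)\;\ge\; \int_{\T^b}\vol_{n-b}\!\bigl(\AJ_M^{-1}(t)\bigr)\, d\vol_{\T^b}(t),
\end{equation*}
where the integrand is finite for almost every regular value $t\in\T^b$ by Sard's theorem, and $\AJ_M^{-1}(t)$ is then a smooth closed $(n-b)$-submanifold, namely a typical fiber $F_M$. The key step is to identify the homology class of such a fiber inside the free abelian cover. Since $\AJ_M$ lifts to a map $\overline{\AJ}_M\colon \overline M \to \R^b$ to the universal cover of $\T^b$, every connected preimage of $t$ in $M$ lifts to a compact cycle in $\overline M$ sitting inside a single fiber of $\overline{\AJ}_M$. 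Summing these lifts (with appropriate orientations) yields a cycle representing the class $\fmanifold\in H_{n-b}(\overline M)$ by construction.

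Using the hypothesis that $\fmanifold\ne 0$, every typical fiber $F_M$ thus lifts to a nonzero cycle in the class $\fmanifold$, hence
\begin{equation*}
\vol_{n-b}(F_M)\;\ge\;\inf\bigl\{\area(z):[z]=\fmanifold\bigr\}=:\mu_{n-b}(\fmanifold).
\end{equation*}
Substituting this pointwise bound into the coarea inequality and using that the integration is over $\T^b$ with its stable-norm volume, one obtains
\begin{equation*}
\vol_n(M,\gm)\;\ge\; \vol(\T^b)\cdot \mu_{n-b}(\fmanifold),
\end{equation*}
which is the desired inequality. The main technical obstacle lies not in the coarea step itself but in the preliminary deformation of $\AJ_M$ to a distance-nonincreasing representative with respect to the stable norm, since the stable norm is in general only a Finsler norm on $\T^b$ and one must argue that smoothing the harmonic Abel--Jacobi map costs only a controlled amount; this is exactly the content of Gromov's original estimate in \cite{Gr1} that is elaborated in \cite{IK}.
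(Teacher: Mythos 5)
The paper itself contains no proof of this statement: it is quoted as Gromov's \cite[Theorem~7.5.B]{Gr1}, with the elaboration deferred to \cite{IK}. Your outline follows exactly the argument of those sources --- the coarea inequality for the Abel--Jacobi map, together with the observation that a regular fiber lifts to a compact cycle in the class $\fmanifold$, so that its $(n-b)$-volume is bounded below by $\inf\{\vol_{n-b}(z):[z]=\fmanifold\}$ --- so the strategy is the intended one, and the lifting step is handled correctly (properness of $\overline{\AJ}_M$, one sheet of the preimage per deck translate, local isometry of the cover).

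The one step you should not assert as written is that $\AJ_M$ admits a representative that is $1$-Lipschitz for the stable-norm metric on $\T^b$. By Federer's duality (cf.~\cite{Fed, Gr3}) the dual of the stable norm on $H_1(M;\R)$ is the \emph{quotient} comass norm on $H^1(M;\R)$: for each individual class $\xi$ one can find a closed $1$-form of comass at most $\|\xi\|^*+\eps$, but a $1$-Lipschitz Abel--Jacobi map would require a \emph{linear} assignment $\xi\mapsto\eta_\xi$ of such nearly comass-minimizing representatives, valid simultaneously for every $\xi$ in the unit ball; for $b\ge 2$ no norm-nonincreasing linear splitting need exist. Hence your displayed conclusion $\vol_n(M,\gm)\ge\vol(\T^b)\cdot\mu_{n-b}(\fmanifold)$ with constant $1$ is stronger than what the argument yields. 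The standard remedy --- and the reason the theorem is phrased as ``a lower bound in terms of the product'' rather than as that product itself --- is to compare the stable norm with a Euclidean norm via John's ellipsoid (or successive minima), run the coarea argument for the resulting linear, hence realizable, family of closed forms, and absorb the distortion into a constant $c(b)$. With that correction your proof is complete and coincides with the one in \cite{Gr1, IK}.
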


%
%

We also reproduce the following result, due to Gromov~\cite{Gr1}, see
also~\cite{BK1, SGT}, stated in terms of systolic category.

\begin{thm}[M.~Gromov]
\label{t:cuplength}
For a closed orientable manifold~$M$, the systolic category of~$M$ is
bounded from below by the rational cup-length of~$M$.
\end{thm}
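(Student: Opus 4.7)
\medskip\noindent\textbf{Proof plan.} Choose integer cohomology classes $\alpha_1,\ldots,\alpha_d\in H^{\ge1}(M;\Z)$ realizing the rational cup-length of $M$, with $\deg\alpha_i=k_i$, $k_1+\cdots+k_d=n$, and $\alpha_1\cupr\cdots\cupr\alpha_d=m\,[M]^*$ for some $m\in\Z\setminus\{0\}$ (rational classes become integral after clearing denominators; $[M]^*$ is nonzero because $M$ is closed and orientable). Since $\sys_k\le\stsys_k$ by \defref{21b}, it suffices to establish that the partition $(k_1,\ldots,k_d)$ is \emph{stably} categorical for $M$, i.e.\ that
\[
\stsys_{k_1}(\gm)\cdots\stsys_{k_d}(\gm)\;\le\;C(M)\,\vol_n(M,\gm)\qquad(\ast)
\]
for every Riemannian metric $\gm$; this will then yield $\syscat(M)\ge d$.

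\medskip\noindent\textbf{Ingredients.} The main input is the duality between the comass norm $\|\cdot\|^*$ on $H^{k}(M;\R)$ and the stable norm $\|\cdot\|_s$ on $H_{k}(M;\R)$. For each $\eps>0$, represent $\alpha_i$ by a closed form $\omega_i$ with comass within a factor $1+\eps$ of $\|\alpha_i\|^*$. The pointwise wedge inequality gives
\[
|m|\;=\;\Bigl|\int_M\omega_1\wedge\cdots\wedge\omega_d\Bigr|\;\le\;\Bigl(\prod_i\|\omega_i\|_\infty\Bigr)\vol_n(M,\gm),
\]
and letting $\eps\to0$ produces the lower bound $\prod_i\|\alpha_i\|^*\ge|m|/\vol_n(M,\gm)$. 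Next, for each $i$ consider the class
\[
a_i\;:=\;\mathrm{PD}\bigl(\alpha_1\cupr\cdots\cupr\widehat{\alpha_i}\cupr\cdots\cupr\alpha_d\bigr)\;\in\;H_{k_i}(M;\Z)/\Tors,
\]
which is nonzero because $\langle\alpha_i,a_i\rangle=m\ne0$, so $\stsys_{k_i}(\gm)\le\|a_i\|_s$. The Gromov estimate, whose proof is sketched below, yields a constant $D_i=D_i(M,\alpha_i)$ with the scale-invariant bound
\[
\stsys_{k_i}(\gm)\cdot\|\alpha_i\|^*\;\le\;D_i
\]
for every $\gm$. Multiplying across $i$ and inserting the comass-wedge lower bound then gives $(\ast)$ with $C(M)=|m|^{-1}\prod_iD_i$.

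\medskip\noindent\textbf{Main obstacle.} The principal difficulty is the upper bound $\stsys_{k_i}(\gm)\cdot\|\alpha_i\|^*\le D_i$. Only the reverse inequality $\stsys_{k_i}\cdot\|\alpha_i\|^*\ge 1$ is formal, from $|\langle\alpha_i,a\rangle|\le\|\alpha_i\|^*\|a\|_s$ applied to $a$ with nontrivial pairing. The required direction instead demands a geometric realization: following \cite{Gr1}, one slices $M$ along a map $f_i\colon M\to\T^{n-k_i}$ (defined on a suitable abelian cover and built from the forms $\omega_j$, $j\ne i$) whose generic fiber represents the class $a_i$, and then invokes the coarea formula to produce a $k_i$-cycle in $a_i$ of controlled mass. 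A Minkowski-style transference on the integral lattice $H^{k_i}(M;\Z)/\Tors$ is used to handle the case where the chosen $\alpha_i$ is not the comass-shortest integer class. When every $k_i=1$ the argument reduces essentially to Gromov's original $1$-systolic inequality via a direct Abel--Jacobi map; the higher-degree case is the technical heart of \cite[\S7]{Gr1} and is refined and made explicit in \cite{BK1,SGT}.
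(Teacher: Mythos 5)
The paper itself contains no proof of this theorem: it is quoted as a result of Gromov's with pointers to \cite{Gr1}, \cite{BK1} and \cite{SGT}, so your proposal must be measured against the argument in those references. Your overall architecture is the correct one and is the one used there: reduce to stable systoles via $\sys_k\le\stsys_k$, arrange $k_1+\cdots+k_d=n$ by Poincar\'e duality, bound $\prod_i\|\alpha_i\|^*$ from below by $|m|/\vol_n(M,\gm)$ through the wedge--comass--volume estimate (you drop the binomial-coefficient constant in the submultiplicativity of comass, which is harmless), and convert comass bounds into stable-systole bounds via Federer's duality between the comass norm on $H^{k}(M;\R)$ and the stable norm on $H_{k}(M;\R)$.

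The genuine gap is the ``Gromov estimate'' $\stsys_{k_i}(\gm)\cdot\|\alpha_i\|^*\le D_i(M,\alpha_i)$ for the \emph{fixed} integral classes $\alpha_i$, on which your whole chain rests: this is false. Already for $k_i=1$ on a flat torus $\R^2/\Lambda$ with $\Lambda$ spanned by $f_1=(1,0)$ and $f_2=(N,\eps)$, the stable norm on $H_1$ is the Euclidean norm on $\Lambda$, so $\stsys_1=\eps$ (realized by $f_2-Nf_1=(0,\eps)$), while the primitive class $e_1$ dual to $f_1$ has comass at least $|\langle e_1,f_2-Nf_1\rangle|/\eps=N/\eps$; the product is at least $N$, unbounded over metrics. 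Transference theorems control only the \emph{successive minima} of the dual lattice pair, not the comass of a class chosen in advance, and your proposed repair --- passing to the comass-shortest integral class --- changes the cohomology class and may annihilate the cup product, so it does not close the gap. The actual argument of \cite{BK1} (refining \cite[\S 7]{Gr1}) runs as follows: for each metric, a Banaszczyk-type transference applied to the dual pair $\bigl(H_{k_i}(M;\Z)_\R,\ \text{stable norm}\bigr)$ and $\bigl(H^{k_i}(M;\Z)_\R,\ \text{comass}\bigr)$ produces $b_{k_i}$ \emph{linearly independent} integral classes $\beta_i^{(1)},\dots,\beta_i^{(b_{k_i})}$, each of comass at most $C/\stsys_{k_i}(\gm)$; since these span $H^{k_i}(M;\Q)$ and $\alpha_1\cupr\cdots\cupr\alpha_d\ne0$, multilinearity yields indices $j_1,\dots,j_d$ with $\beta_1^{(j_1)}\cupr\cdots\cupr\beta_d^{(j_d)}\ne0$ modulo torsion in $H^n(M;\Z)$, hence with evaluation on $[M]$ of absolute value at least $1$; applying the wedge--comass--volume estimate to \emph{these} classes gives $1\le C'\vol_n(\gm)\prod_i\stsys_{k_i}(\gm)^{-1}$, which is your $(\ast)$. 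Finally, the slicing/coarea construction you sketch as the ``technical heart'' belongs to the unstable Abel--Jacobi argument behind \theoref{t:abcover}, not to the stable cup-length inequality.
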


Note that Theorem~\ref{t:cuplength} is not directly related to
Theorem~\ref{t:abcover}.  For instance, Theorem~\ref{t:cuplength}
implies that the systolic category of~$S^2\times S^2$, or~$\C {\mathbb
P} ^2$, equals~$2$, while Theorem~\ref{t:abcover} gives no information
about simply-connected manifolds.

\section{Fundamental group and systolic category}
\label{s:group}

Throughout this section we will assume that~$\pi$ is a finitely
presented group.  Denote by~$\cd(\pi)$ the cohomological dimension
of~$\pi$, see \cite{Bro}.

\begin{lemma}
\label{l:classif}
If~$\pi_1(M)$ is of finite cohomological dimension then~$M$ admits a
map~$f: M \to B$ to a simplicial complex~$B$ of dimension at most
$\cd(\pi_1 M)$, inducing an isomorphism of fundamental groups.
\end{lemma}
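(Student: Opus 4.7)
Let $\pi = \pi_1(M)$ and set $d = \cd(\pi)$; since $M$ is a closed manifold, $\pi$ is finitely presented. The plan is to construct a CW complex $B$ (which can then be triangulated to a simplicial complex of the same dimension) with $\dim B \leq d$, together with a map $f \colon M \to B$ inducing an isomorphism on $\pi_1$. My approach is to realize a $K(\pi,1)$---or, when this is not possible at the required dimension, a non-aspherical substitute---as a complex of dimension at most $d$, and then to invoke the classifying-map construction.

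The construction of $B$ splits into cases according to the value of $d$. If $d = 0$, then $\pi$ is trivial and $B$ is taken to be a point. If $d = 1$, then $\pi$ is free by the Stallings--Swan theorem, and $B$ can be taken to be a finite wedge of circles. If $d \geq 3$, the Eilenberg--Ganea theorem yields a $K(\pi,1)$-complex $B$ of dimension exactly $d$. In each of these three cases $B$ is aspherical, and since homotopy classes of maps into a $K(\pi,1)$ correspond bijectively to homomorphisms of fundamental groups (up to conjugation), taking $f$ to realize the tautological identification $\pi_1(M) = \pi = \pi_1(B)$ delivers the required map.

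The residual case $d = 2$ is the principal technical obstacle, since the Eilenberg--Ganea conjecture---asserting the existence of a $2$-dimensional $K(\pi,1)$ whenever $\cd(\pi) = 2$---remains open. The lemma, however, requires only $\pi_1(B) \cong \pi$ and $\dim B \leq 2$, not that $B$ be aspherical. I would therefore take $B$ to be a presentation $2$-complex of $\pi$ and build $f$ skeleton-by-skeleton on a chosen CW structure of $M$: collapse a maximal tree in $M^{(1)}$ to the basepoint of $B$, send the remaining $1$-cells along loops in $B^{(1)}$ representing the images of the corresponding generators, and extend across $2$-cells, which is possible because their attaching loops realize relations that already hold in $\pi_1(B) = \pi$. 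Extension across cells of dimension $\geq 3$ is the delicate point: the obstructions live in twisted cohomology $H^{k}(M; \pi_{k-1}(B))$ for $k \geq 3$, and one must use the hypothesis $\cd(\pi) = 2$ to show that they can be killed by suitable modifications on lower skeleta. This last step---handling the extension past the $2$-skeleton against a possibly non-aspherical target---is the main obstacle in the proof.
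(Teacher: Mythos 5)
Your cases $d\ne 2$ agree with the paper (the paper invokes Eilenberg--Ganea uniformly for $d\ne 2$; your separate treatment of $d=0,1$ via Stallings--Swan is fine). The problem is the case $d=2$, where you yourself flag the decisive step as unresolved, and it is in fact a genuine gap: if you map $M$ directly to a presentation $2$-complex $B$ and extend skeleton by skeleton, the obstructions live in $H^{k}\bigl(M;\pi_{k-1}(B)\bigr)$ for all $3\le k\le n=\dim M$, and these groups are \emph{not} controlled by $\cd(\pi)=2$. The hypothesis $\cd(\pi)=2$ kills the cohomology of the \emph{group} above degree $2$, not the twisted cohomology of the $n$-manifold $M$; for instance $H^{n}(M;\mathcal M)$ is typically nonzero by Poincar\'e duality. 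Moreover $\pi_2(B)$ of a presentation complex is a nontrivial $\Z[\pi]$-module in general, so there is no reason the obstructions vanish, and "killing them by modifications on lower skeleta" is precisely the content you would need to supply.

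The paper sidesteps this by relocating the obstruction problem from $M$ to the classifying space itself. One first takes a model $B\pi$ of dimension at most $3$ (available when $\cd(\pi)=2$) and maps $M\to B\pi$ with no obstruction at all, since the target is aspherical. Then one compresses $B\pi$ into its own $2$-skeleton: the single obstruction to retracting the $3$-dimensional complex $B\pi$ onto $B\pi^{(2)}$ (after adjusting the identity of $B\pi^{(2)}$ on $2$-cells rel the $1$-skeleton) lies in $H^{3}\bigl(B\pi;\pi_2(B\pi^{(2)})\bigr)$, which is group cohomology of $\pi$ in degree $3$ and hence vanishes because $\cd(\pi)=2$. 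The resulting $q:B\pi\to B\pi^{(2)}$ is the identity on the $1$-skeleton, so it induces an isomorphism on $\pi_1$, and $q\circ f:M\to B\pi^{(2)}$ is the desired map. The idea you are missing is exactly this change of source: use asphericity to get off $M$ for free, and pay the obstruction-theoretic price only on the low-dimensional complex $B\pi$, where $\cd(\pi)=2$ actually applies.
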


\begin{proof}
Let~$\pi=\pi_1(M)$.  When~$d=\cd(\pi)\not=2$, by the Eilenberg--Ganea
theorem~\cite{EG}, there exists a~$d$-dimensional model~$B\pi$ for the
classifying space of~$\pi$, which may be assumed to be a simplicial
complex.  This yields the desired map~$f: M \to B\pi$ by universality
of the classifying space.

In the case~$d=2$ it is still unknown whether one can assume that
$\dim B \pi=2$ for all~$\pi$.  That this is so is the content of the
Eilenberg--Ganea conjecture.  In this case we can assume only
that~$\dim B\pi\le 3$.

We claim that there is a map~$q:B\pi\to B\pi^{(2)}$ onto the
$2$-skeleton that induces an isomorphism of the fundamental
groups. Indeed, the obstruction to retracting~$B\pi$ onto~$B\pi^{(2)}$
is an element of the cohomology group
\[
H^3 \left(B\pi;\pi_2\left(B\pi^{(2)}\right)\right)
\]
with coefficients in the~$\pi$-module~$\pi_2(B\pi^{(2)})$. This group
is zero, since by hypothesis~$cd(\pi)=2$.  By the classical
obstruction theory the identity map of~$B\pi^{(2)}$ can be changed on
the 2-dimensional skeleton without changes on the 1-dimensional
skeleton in such a way that a new map has an extension~$q$
to~$B\pi$. Since~$q:B\pi\to B\pi^{(2)}$ is the identity on the
1-skeleton, it induces an isomorphism of the fundamental groups.

Now in the case~$d=2$ we use the complex~$B\pi^{(2)}$ instead of
$B\pi$ together with the map~$q\circ f:M\to B\pi^{(2)}$ instead of
$f$.
\end{proof}

\begin{thm}
\label{t:cd}
Let~$M$ be a closed~$n$-manifold, with~$\cd(\pi_1(M))=d \leq n$.  Then
the systolic category of~$M$ is at most~$(n+d)/2$.
\end{thm}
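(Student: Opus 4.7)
The plan is to reduce \theoref{t:cd} to the claim that a categorical partition of $M$ can contain at most $d$ unit factors, and then to establish this claim using the classifying map produced by \lemref{l:classif}.

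For the combinatorial reduction, let $(k_1,\ldots,k_s)$ be a categorical partition of $M$, arranged so that $k_1\le\cdots\le k_s$, and let $a=|\{i:k_i=1\}|$ count its unit factors. The remaining $s-a$ indices satisfy $k_i\ge 2$ with $\sum_{i>a}k_i=n-a$, so $2(s-a)\le n-a$, whence
\[
s\le \tfrac{n+a}{2}.
\]
Thus \theoref{t:cd} follows once we prove $a\le d$.

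To bound $a$, fix $f\colon M\to B$ with $\dim B\le d$ inducing an isomorphism on $\pi_1$, as provided by \lemref{l:classif}. Since $\sys_1=\min(\pisys_1,\stsys_1)$, every $\sys_1$ factor appearing in a categorical inequality is controlled through $\pi_1(M)\cong\pi_1(B)$: homotopy systoles directly by $f_*$, and stable systoles through the induced isomorphism $H_1(M;\R)\cong H_1(B;\R)$. The dimensional input $\dim B\le d$ is what one exploits. A natural implementation is to construct, for each $\eps>0$, a metric $g_\eps$ on $M$ of the form $f^*g_B+\eps^2 g_\perp$, where $g_B$ is a fixed metric on (a spine of)~$B$ and $g_\perp$ is transverse to the fibers of $f$. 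Then $\vol_n(g_\eps)\sim\eps^{n-d}$ while each $\sys_1(g_\eps)$ stays comparable to $\sys_1(B,g_B)$, and substituting $g_\eps$ into the categorical inequality while balancing the $\eps$-exponents as $\eps\to 0$ should force $a\le d$.

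The main obstacle is the scaling behavior of the higher-degree factors $\sys_{k_i}$ with $k_i\ge 2$ under this collapse: one must determine which $k_i$-cycles can be represented with $\eps$-bounded volume and which are forced into the collapsing directions. This is transparent in the product case $M=B\times F$, but for a general classifying map it is delicate and will likely require an additional ingredient, such as a Babenko-style homotopy invariance of systolic inequalities (replacing $M$ by a model adapted to $f$) or a Brunnbauer-style reformulation in terms of classifying data. Once this analytic point is settled, the bound $a\le d$ is in hand, and combined with the combinatorial reduction above it yields $\syscat(M)\le(n+d)/2$, completing the proof.
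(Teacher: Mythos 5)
Your combinatorial reduction (a categorical partition of size $s$ with $a$ unit factors satisfies $n\ge a+2(s-a)$, hence $s\le(n+a)/2$, so it suffices to prove $a\le d$) is exactly the right skeleton and is the implicit content of the paper's conclusion; and your family $f^*g_B+\eps^2 g_\perp$ is, up to the overall rescaling by $\eps^{-2}$ under which every categorical inequality is invariant, the same one-parameter degeneration the paper uses. But the step you flag as ``the main obstacle'' is a genuine gap as written, and it needs neither Babenko-style homotopy invariance nor Brunnbauer's machinery. The missing idea is a one-line device: take the transverse term to be a \emph{fixed genuine Riemannian metric} $g_M$ on all of $M$, i.e.\ work with $g_t=g_M+t^2f^*(g_B)$ as in \eqref{51}. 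Then (i) $\vol_n(g_t)\le Ct^d$ because $f^*(g_B)$ has pointwise rank at most $d$; (ii) $\sys_1(g_t)\ge c\,t$ because $f_*$ is an isomorphism on $\pi_1$ (hence on $H_1$, which also handles the stable $1$-systole), so noncontractible loops have uniformly long images in $(B,t^2g_B)$; and (iii) --- the point you are missing --- $g_t\ge g_M$ forces
\begin{equation*}
\sys_k(M,g_t)\ \ge\ \sys_k(M,g_M)\ >\ 0 \quad\text{for all } k\ge 2,
\end{equation*}
uniformly in $t$. Substituting into a categorical inequality with $a$ unit factors gives $c^a t^a\prod_{k_i\ge2}\sys_{k_i}(g_M)\le C(M)\,Ct^d$, whence $a\le d$.

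In your collapsed normalization the same monotonicity reads $\sys_k(g_\eps)\ge\eps^k\sys_k(g_M)$, which is precisely the lower bound you need on the higher-degree factors: it yields $\prod_i\sys_{k_i}(g_\eps)\gtrsim\eps^{\,n-a}$ against $\vol_n(g_\eps)\lesssim\eps^{\,n-d}$, and $a>d$ would violate the categorical inequality as $\eps\to0$. So no analysis of which $k_i$-cycles collapse is required; a crude uniform bound on every higher systole suffices, because the unit factors alone carry the whole argument. Two further small cautions that the background-metric formulation also removes: with $g_\perp$ only ``transverse to the fibers'' the form $f^*g_B+\eps^2g_\perp$ need not be positive definite, and $\sys_1(B,g_B)$ could a priori be an infimum over a noncompact complex; since $f(M)$ is compact it lies in a finite subcomplex, and the paper's $g_t$ is an honest metric for every $t$.
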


\begin{proof}
Let~$g_M$ be a fixed background metric on~$M$. Choose a map
\begin{equation*}
f: M \to B
\end{equation*}
as in \lemref{l:classif}, as well as a fixed PL metric~$g_B$ on~$B$.
Consider the metric~$f^*(g_B)$ on~$M$ pulled back by~$f$ (see
\cite{Bab1, SGT}).  This metric is defined by a quadratic form of rank
at most~$d$ at every point of~$M$.  Note that the quadratic form can
be thought of as the {\em square\/} of the length element.

Next, we scale the pull-back metric by a large real parameter~$t\in
\R$.  When the length of a vector is multiplied by~$t$, the convention
is to write the metric as~$t^2 f^* (g_B)$.  Since the rank of the
metric is at most~$d$, the volume of~$g_M + t^2 f^* (g_B)$ grows at
most as~$t^d$, where~$g_M$ is any fixed background metric on~$M$.  We
obtain a family of metrics
\begin{equation}
\label{51}
g_t= g_M + t^2 f^*(g_B)
\end{equation}
on~$M$ with volume growing at most as~$t^d$ where~$d=\dim B$, while
the~$1$-systole grows as~$t$.  Thus the ratio
\begin{equation*}
\frac{\sys_1^{d+1}(M,g_t)}{\vol(M,g_t)}
\end{equation*}
tends to infinity.  The addition of a fixed background metric on~$M$
in~\eqref{51} ensures a uniform lower bound for all its~$k$-systoles
for~$k\geq 2$.  It follows that a partition
\begin{equation*}
n= \underset{d+1}{\underbrace{1+\dots+1}} +k_{d+2}+\dots+k_r
\end{equation*}
cannot be categorical.
\end{proof}

\section{Systolic category, LS category, and~$\cd(\pi)$}

\begin{rem}
\label{sharp}
The upper bound of Theorem~\ref{t:cd} is sharp when~$\pi$ is a free
group ($d=1$) in the sense that~$\pi$ is the fundamental group of a
closed~$(2k+1)$-manifold
\begin{equation*}
M=\left( \prod_{i=1}^{k-1}S^2 \right) \times (\#(S^1\times S^2))
\end{equation*}
with~$\syscat M=k+1= (2k+2)/2$ in view of Theorem~\ref{t:cuplength}.
Similarly, for the~$(2k+2)$-dimensional manifold
\begin{equation*}
M=S^3\times\prod_{i=1}^{k-2}S^2\times (\#(S^1\times S^2)),
\end{equation*}
where~$k>1$, we have~$\syscat M=k+1$.  The case of a~$4$-dimensional
manifold follows from \corref{c:free}.
\end{rem}


\begin{cor}
\label{c:free}
The systolic category of a closed orientable~$4$-manifold with free
fundamental group is at most~$2$, and it is exactly~$2$ if~$M$ is not
a homotopy sphere.
\end{cor}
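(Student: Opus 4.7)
The plan has two halves: an unconditional upper bound of $2$, obtained as an immediate consequence of \theoref{t:cd}, and a matching lower bound obtained from \theoref{t:cuplength} whenever $M$ is not a homotopy sphere.

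For the upper bound, I would invoke the fact that a free group $\pi$ has cohomological dimension $\cd(\pi)\le 1$ (with equality iff $\pi$ is nontrivial). Applying \theoref{t:cd} with $n=4$ and $d=\cd(\pi_1 M)\le 1$ yields
\[
\syscat(M)\le \frac{n+d}{2}\le \frac{5}{2},
\]
and since $\syscat$ is an integer this gives $\syscat(M)\le 2$.

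For the matching lower bound, I would appeal to \theoref{t:cuplength}, so that it suffices to exhibit two cohomology classes of positive degree with nonzero rational cup product. I would split into two cases according to whether the free group $\pi_1(M)$ is trivial or not. If $\pi_1(M)$ is nontrivial free, then $b_1(M)\ge 1$, so $H^1(M;\Q)\ne 0$; by Poincar\'e duality for the closed orientable $4$-manifold $M$, the pairing $H^1(M;\Q)\otimes H^3(M;\Q)\to H^4(M;\Q)\cong\Q$ is nondegenerate, producing classes $\alpha\in H^1$ and $\beta\in H^3$ with $\alpha\cupr\beta\ne 0$, hence rational cup-length at least $2$. If $\pi_1(M)=1$, then $M$ is simply connected; since $M$ is not a homotopy sphere, Hurewicz together with Whitehead's theorem force $H_2(M;\Q)\ne 0$. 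The intersection form on $H^2(M;\Q)$ is nondegenerate, so there exist $\alpha,\beta\in H^2$ with $\alpha\cupr\beta\ne 0$, again yielding cup-length at least $2$.

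I do not expect any genuine obstacle: the upper bound is a direct application of the already-proved \theoref{t:cd}, and the lower bound uses only rational Poincar\'e duality plus \theoref{t:cuplength}. The only point to be careful about is the simply connected subcase, where one must use Hurewicz plus Whitehead to translate ``not a homotopy sphere'' into $H_2(M;\Q)\ne 0$ before invoking the intersection form.
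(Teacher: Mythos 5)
Your proposal is correct and follows essentially the same route as the paper: the upper bound comes from \theoref{t:cd} with $\cd(\pi_1 M)\le 1$ (the paper phrases this as ruling out the partitions $4=1+1+2$ and $4=1+1+1+1$), and the lower bound comes from \theoref{t:cuplength} via the same case split into $b_1(M)\ge 1$ (pairing $H^1$ with $H^3$) versus simply connected with $b_2(M)>0$ (intersection form on $H^2$). The only point worth making explicit in the simply connected case is that $H_2(M)$ is free abelian (by Poincar\'e duality and universal coefficients), so $H_2(M)\ne 0$ indeed gives $b_2(M)>0$, as the paper notes.
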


\begin{proof}
The partitions~$4=1+1+2$ and~$4=1+1+1+1$ are ruled out by
Theorem~\ref{t:cd}.  The only remaining possibilities are the
partitions~$4=1+3$ (corresponding to category value~$2$) and~$4=4$
(that would correspond to category value~$1$).  If~$M$ is not
simply-connected, then by the hypothesis of the corollary,~$b_1(M)\geq
1$ and therefore~$M$ satisfies a systolic inequality of
type~\eqref{27} (see Theorem~\ref{t:cuplength}), proving that
$\syscat(M)=2$.  If~$M$ is simply-connected but not a homotopy sphere,
then~$H_2(M)=\pi_2(M)$ is free abelian and~$b_2(M)>0$, so that~$M$
satisfies the systolic inequality
\[
\stsys_2(M)^2 \leq b_2(M) \vol(M),
\]
see \cite{BK1} (a special case of Theorem~\ref{t:cuplength}),
proving~$\syscat(M)=2$.
\end{proof}

\begin{cor}
\label{c:dim=4}
For every closed orientable~$4$-manifold we have the
inequality~$\syscat M \le \cat M$, and the strict inequality is
possible in the case~$\syscat M=2<3=\cat M$ only.
\end{cor}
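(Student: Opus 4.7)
The approach is case analysis on $\cat M \in \{1, 2, 3, 4\}$, using Theorem~\ref{t:cd} together with the automatic bound $\syscat M \le \dim M = 4$. The extreme cases are immediate: $\cat M = 4$ gives $\syscat M \le 4 = \cat M$ directly, while $\cat M = 1$ characterizes $M$ as a homotopy $4$-sphere, for which $\syscat M = 1$. The substance is in $\cat M \in \{2, 3\}$.

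For $\cat M = 3$, I aim for $\syscat M \le 3$. The only $4$-factor partition of $4$ is $1{+}1{+}1{+}1$, so $\syscat M = 4$ is equivalent to the categoricity of $\sys_1(M)^4 \le C\vol(M)$; by Gromov's inequality and its Babenko converse this holds precisely when $M$ is essential (over $\Z$ or $\Z_2$). An essential closed orientable $n$-manifold has $\cat M = n$ via $\cat M \ge \cd_{\mathbb F}(\pi_1 M) \ge n$, where $\mathbb F$ is a field witnessing essentialness. Hence $\cat M = 3$ forces $M$ non-essential, so $\syscat M \le 3$.

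For $\cat M = 2$, I aim for $\syscat M \le 2$. Theorem~\ref{t:cd} delivers $\syscat M \le 2$ provided $\cd(\pi_1 M) \le 1$, so the task is to exclude $\cd(\pi_1 M) \ge 2$. My plan is contrapositive: assuming $\cd(\pi_1 M) \ge 2$, I use the classifying map $f : M \to B\pi_1(M)$ together with Poincar\'e duality in the closed orientable $4$-manifold $M$ to produce cohomology classes whose triple cup product is nonzero in $H^4(M)$, forcing $\cuplength M \ge 3$ and hence $\cat M \ge 3$, a contradiction.

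The strictness claim then follows by verifying $\syscat M = \cat M$ in cases $\cat M \in \{1, 2, 4\}$: trivially for $\cat M = 1$; by Theorem~\ref{t:cuplength} (yielding $\syscat M \ge \cuplength M \ge 2$ whenever $M$ is not a rational homology sphere) or by Corollary~\ref{c:free} in the free-fundamental-group subcase of $\cat M = 2$; and by essentialness (giving $\syscat M = 4$) for $\cat M = 4$. Thus strict inequality can only occur with $\syscat M = 2 < 3 = \cat M$. The main obstacle will be the cup-length argument for $\cat M = 2$: the pullback $f^{\ast}$ on $H^2$ need not be injective in general, so the proof will require a careful choice of cohomology classes---possibly with a coefficient switch to $\Z_p$ or $\Q$---together with Poincar\'e duality in order to guarantee nonvanishing of the required triple cup product.
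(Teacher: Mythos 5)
Your plan founders at the case $\cat M = 2$, where you propose to show that $\cd(\pi_1 M)\ge 2$ forces a nonzero triple cup product in $H^4(M)$ and hence $\cat M\ge 3$. This cannot work. The statement actually needed is the theorem of \cite{DKR} that a closed manifold of dimension $\ge 3$ with non-free fundamental group has $\cat M\ge 3$, and that result is genuinely stronger than any cup-length bound. Concretely, take a closed orientable $4$-manifold $M$ with $\pi_1 M\cong\Z/3$ (every finitely presented group is realized). Then $H^1(M;A)=0$ for $A=\Z,\Q,\Z_2$, while for $A=\Z_3$ every degree-one class is a multiple of a single generator $\alpha$, and $\alpha\cupr\alpha=0$ because $2\alpha^2=0$ with $2$ invertible mod $3$. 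Since a product of three positive-degree classes landing in degree $\le 4$ must contain at least two degree-one factors, every such product vanishes, so $\cuplength M=2$ over all coefficients; nevertheless $\cat M= 3$ by \cite{DKR}. Hence no ``careful choice of classes,'' coefficient switch, or appeal to Poincar\'e duality can close this gap: the implication $\cat M\le 2\Rightarrow\pi_1 M$ free is proved via the Berstein--Svarc class and category weight, not cup products, and has to be quoted as an external input.

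For comparison, the paper's proof splits on whether $\pi_1 M$ is free rather than on the value of $\cat M$: if $\pi_1 M$ is free then $\cat M\le 2$ by \cite{MK} and \corref{c:free} finishes that case; if $\pi_1 M$ is not free then $\cat M\ge 3$ by \cite{DKR}, and $\syscat M\le\cat M$ follows because $\syscat M=4$ forces $\cat M=4$ by \cite{KR1, SGT}. Your handling of the cases $\cat M\in\{1,3,4\}$ and of the strictness claim is essentially this second half of the paper's argument, but note that your justification of ``essential $\Rightarrow\cat M=n$'' via $\cat M\ge\cd_{{\mathbb F}}(\pi_1 M)\ge n$ is not correct as stated (for $M=\rpn$ one has $\cd_{\Z_2}(\Z/2)=\infty$ while $\cat M=n$); the correct mechanism is the category weight of the pullback of the Berstein class, i.e., the equivalence of $\syscat M=n$, $\cat M=n$, and essentialness of $M$ established in \cite{KR1}.
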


We do not know, however, if the case of the strict inequality can be
realized.

\begin{proof}
If the fundamental group of~$M$ is free then~$\cat M \le 2$~\cite{MK},
and in this case the result follows from \corref{c:free}. If~$\syscat
M=4$ then~$\cat M=4$,~\cite{KR1, SGT}, and hence~$\syscat M \le \cat
M$ if~$\cat M \ge 3$.  Finally, if the fundamental group of~$M$ is not
free then~$\cat M \ge 3$ \cite{DKR}.
\end{proof}

\begin{rem}
\label{r:low}
The equality~$\syscat M^n = \cat M^n$ for~$n\le 3$ was proved in
\cite{KR1,KR2}.
\end{rem}

\section{Self-linking of fibers and a lower bound for~$\syscat$}
\label{s:abjac}

In this section we will continue with the notation of
\theoref{t:abcover}.

\begin{prop}
\label{p:fm}
Let~$M$ be a closed connected manifold (orientable or non-orientable).
If a typical fiber of the Abel--Jacobi map represents a nontrivial
$(n-b)$-dimensional homology class in~$\ov M$, then systolic category
satisfies~$\syscat(M)\geq b+1$.
\end{prop}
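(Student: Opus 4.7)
The plan is to exhibit a categorical partition of~$n$ of size~$b+1$, namely
\[
n \ =\ \underbrace{1+1+\cdots+1}_{b \text{ ones}} + (n-b),
\]
and to verify the associated systolic inequality
\[
\sys_1(M,\gm)^{b}\cdot \sys_{n-b}(M,\gm) \ \le\ C(M)\,\vol_n(M,\gm).
\]
Once this holds for every Riemannian metric~$\gm$ on~$M$, the definition of~$\syscat$ immediately yields~$\syscat(M)\ge b+1$. The two factors will arise from two distinct geometric inputs: Gromov's \theoref{t:abcover} supplies the~$\sys_{n-b}$ factor, and Minkowski's lattice-point theorem on the Jacobi torus supplies the~$\sys_1^{b}$ factor.

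For the first factor, the assumption~$\fmanifold\ne 0$ places us directly in the setting of \theoref{t:abcover}, giving a topological constant~$c_1=c_1(M)>0$ with
\[
\vol_n(M,\gm)\ \ge\ c_1\,\vol(\T^b)\cdot\inf\bigl\{\area(c)\ :\ [c]=\fmanifold\in H_{n-b}(\ov M)\bigr\}.
\]
Since~$\fmanifold$ is a nonzero class, any cycle representing it has area at least~$\sysh_{n-b}(\ov M,\gm;\Z)$; applying \defref{21b} with the regular cover~$M'=\ov M$ and~$A=\Z$ then gives~$\sysh_{n-b}(\ov M,\gm;\Z)\ge \sys_{n-b}(M,\gm)$. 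Consequently
\[
\vol_n(M,\gm)\ \ge\ c_1\,\vol(\T^b)\cdot\sys_{n-b}(M,\gm).
\]

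For the remaining~$b$ factors, I would apply Minkowski's lattice-point theorem to the Jacobi torus~$\T^b=H_1(M;\R)/H_1(M;\Z)_\R$ endowed with the stable-norm flat (Finsler) metric. The unit ball of the stable norm is a centrally symmetric convex body in~$H_1(M;\R)$, so Minkowski produces a dimensional constant~$c_2=c_2(b)>0$ satisfying
\[
\stsys_1(\T^b)^{b}\ \le\ c_2\,\vol(\T^b).
\]
By construction of the Abel--Jacobi map one has~$\stsys_1(\T^b)=\stsys_1(M,\gm)$, and \defref{21b} gives~$\sys_1(M,\gm)\le \stsys_1(M,\gm)$. Multiplying the two displayed inequalities yields the categorical inequality with~$C(M)=(c_1c_2)^{-1}$.

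The single step requiring care is the Finsler version of Minkowski's theorem on~$\T^b$: one must use that the stable norm is a genuine Banach norm whose unit ball is symmetric and convex, so the classical lattice-point argument still gives a bound on~$\stsys_1$ in terms of the covolume, with a constant depending only on~$b$. Everything else is a direct unraveling of \defref{21b} and a citation of \theoref{t:abcover}.
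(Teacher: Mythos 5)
Your proposal is correct and follows essentially the same route as the paper's (much terser) proof: Gromov's Theorem~\ref{t:abcover} supplies the $\sys_{n-b}$ factor via the nonvanishing of $\fmanifold$ in the regular cover $\ov M$, and a Minkowski-type bound $\stsys_1^b \le c(b)\vol(\T^b)$ for the stable-norm Jacobi torus supplies the remaining $b$ factors. The only detail worth adding is that for non-orientable $M$ the fiber class may only be defined over $\Z_2$, which is exactly why Definition~\ref{21b} allows $A\in\{\Z,\Z_2\}$; with that adjustment your argument covers both cases.
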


\begin{proof}
If the fiber class is nonzero, then the Abel--Jacobi map is
necessarily surjective in the set-theoretic sense.  One then applies
the technique of Gromov's proof of Theorem~\ref{t:abcover},
cf.~\cite{IK}, combined with a lower bound for the volume of the
Jacobi torus in terms of the~$b$-th power of the stable~$1$-systole,
to obtain a systolic lower bound for the total volume corresponding to
the partition
\begin{equation*}
n=1+1+\cdots+1+(n-b),
\end{equation*}
where the summand ``$1$'' occurs~$b$ times.  Note that Poincar\'e
duality is not used in the proof.
\end{proof}


The goal of the remainder of this section is to describe a sufficient
condition for applying Gromov's theorem, so as to obtain such a lower
bound in the case when the fiber class in~$M$ vanishes.

From now on we assume that~$M$ is orientable, has dimension~$n$,
and~$b_1(M)=2$.  Let~$\{\alpha,\beta \} \subset H^1(M)$ be an integral
basis for~$H^1(M)$.  Let~$F_M$ be a typical fiber of the Abel--Jacobi
map.  It is easy to see that~$[F_M]$ is Poincar\'e dual to the cup
product~$\ga\cupr \gb$.  Thus, if~$\ga\cupr\gb\ne 0$ then~$\syscat M\ge
3$ by \propref{p:fm}.  If~$\ga\cupr \gb=0$ then the Massey
product~$\la\ga,\ga,\gb\ra$ is defined and has zero indeterminacy.

\begin{thm}\label{t:massey}
Let~$M$ be a closed connected orientable manifold of dimension~$n$
with~$b_1(M)=2$.  If~$\la\ga,\ga,\gb\ra\cupr\gb \ne 0$ then~$\syscat M
\ge 3$.
\end{thm}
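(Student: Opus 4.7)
The strategy is to apply \propref{p:fm}: we aim to show that the hypothesis $\la\ga,\ga,\gb\ra \cupr \gb \ne 0$ in $H^3(M;\Q)$ forces the lifted fiber class $\fmanifold \in H_{n-2}(\ov M;\Q)$ to be non-zero, at which point \propref{p:fm} immediately delivers $\syscat M \ge b_1(M)+1 = 3$. The content here is non-trivial, since the very fact that the Massey product $\la\ga,\ga,\gb\ra$ is defined requires $\ga \cupr \gb = 0$, which in turn forces $[F_M]=0$ in $H_{n-2}(M;\Q)$; hence the argument must genuinely take place upstairs in $\ov M$.

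The key observation is that on the maximal free abelian cover $\ov M$, the pulled-back classes $\ov\ga$ and $\ov\gb$ are exact, so they admit primitives $\phi, \psi$ that transform by additive characters under the $\Z^2$-deck action. I would choose cochain representatives $u, v$ on $M$ with $du = \ga \cupr \ga$ and $dv = \ga \cupr \gb$, so that $\la\ga,\ga,\gb\ra$ is represented by $u \cupr \gb - \ga \cupr v$. Using $\phi, \psi$ one produces an explicit cochain $\eta$ on $\ov M$ whose coboundary is the pull-back of $(u \cupr \gb - \ga \cupr v) \cupr \gb$. The evaluation of $\la\ga,\ga,\gb\ra \cupr \gb$ on $[M]$, which is non-zero by hypothesis, can then be rewritten by integration over a fundamental domain of $\ov M \to M$ as a boundary contribution involving $\eta$.

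The final step is to identify this boundary contribution as an intersection or linking number between $\fmanifold$ and an auxiliary cycle on $\ov M$. Geometrically, the cochain $v$ corresponds to an $(n-1)$-chain $C$ in $M$ with $\partial C = F_M$; its equivariant lift to $\ov M$ would bound $\fmanifold$ precisely when $\fmanifold = 0$. In that case the boundary contribution would reduce to pairings of exact cochains with genuine cycles, and would vanish, contradicting the hypothesis. Hence $\fmanifold \ne 0$ in $H_{n-2}(\ov M;\Q)$, and \propref{p:fm} completes the proof. I expect the main technical obstacle to be precisely this cochain-to-chain translation: the primitives $\phi, \psi$ and the equivariant lift of the bounding chain of $F_M$ must be chosen compatibly so that the cochain identity on $\ov M$ descends to the claimed geometric intersection statement.
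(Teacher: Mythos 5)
Your overall route is the same as the paper's: reduce the theorem to the non-vanishing of the lifted fiber class~$\fmanifold$ in the maximal free abelian cover and then invoke \propref{p:fm} (which rests on Gromov's \theoref{t:abcover}). Where you differ is in how the implication ``$\la\ga,\ga,\gb\ra\cupr\gb\ne 0$ implies $\fmanifold\ne 0$'' is established. The paper interposes a geometric invariant, the \emph{self-linking class} $\ell_M(F_M,F_M)\in H_{n-3}(M)$, defined as the intersection $X\cap F'$ of an $(n-1)$-chain $X$ bounding one typical fiber with a parallel fiber $F'$; a short intersection-theoretic computation (representing $\ga,\gb$ by hypersurfaces $A,B$ with $F_M=A\cap B$) identifies this class, up to sign, with the Poincar\'e dual of $\la\ga,\ga,\gb\ra\cupr\gb$. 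The contrapositive of the remaining step is then proved upstairs: if $\ov F_x$ bounds a compact hypersurface $\surface\subset\manbar$, one intersects the translates $T_g\surface$ with the complete hypersurface $S=\overline{\AJ}^{-1}(r_y)$ lying over a properly imbedded ray, and writes $p(\surface)\cap F_y=\sum_g\partial\left(T_g\surface\cap S\right)$ as a finite sum of boundaries, so the self-linking class vanishes. This is exactly the ``cochain-to-chain translation'' you flag as the main obstacle, carried out on the chain level from the outset, which is what makes it tractable. Your cochain version should work, but two points need care. First, for $n>3$ the class $\la\ga,\ga,\gb\ra\cupr\gb\in H^3(M)$ cannot be ``evaluated on $[M]$''; you must either cap with $[M]$ to land in $H_{n-3}(M)$ (which is precisely what the self-linking class is) or cup with an auxiliary test class in $H^{n-3}(M)$ before integrating over a fundamental domain. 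Second, the primitives $\phi,\psi$ on $\manbar$ are not deck-invariant, so when $\fmanifold=0$ the boundary contribution does not vanish ``by exactness alone''; what makes it vanish is that it becomes a sum over the deck group $\Z^2$ of boundaries, finite because the bounding chain is compact --- the analogue of the finiteness argument the paper makes explicit in \propref{propal} (including the passage to a multiple $NF_x$ if the integral class bounds only rationally).
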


Note that, on the Lusternik--Schnirelmann side, we similarly have a
lower bound~$\cat M \ge 3$ if~$\la\ga,\ga,\gb\ra\ne 0$, since the
element~$\la \ga, \ga, \gb \ra$ has category weight~$2$~\cite{R1,R2}.

To prove the theorem, we reformulate it in the dual homology language.

\begin{definition}
 Let~$F=F_M \subset M$ be an oriented typical fiber.  Assume~$[F]=0\in
H_{n-2}(M)$.  Choose an~$(n-1)$-chain~$X$ with~$\partial X = F$.
Consider another regular fiber~$F'\subset M$.  The oriented
intersection~$X\cap F'$ defines a class
\begin{equation*}
\ell_M(F_M, F_M) \in H_{n-3}(M),
\end{equation*}
which will be referred to as the {\em self-linking class\/} of a
typical fiber of~$\AJ_M$.
\end{definition}

The following lemma asserts, in particular, that the self-linking
class is well-defined, at least up to sign.

\begin{lemma}
The class~$\ell_M(F_M, F_M)$ is dual, up to sign, to the cohomology
class~$\la \alpha,\alpha,\beta \ra \cupr\beta \in H^3(M)$.
\end{lemma}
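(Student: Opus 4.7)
The plan is to compute both sides in the de Rham model, exploiting the fact that $\alpha\wedge\alpha=0$ on the nose for any $1$-form representing $\alpha$. First, represent $\alpha,\beta$ by closed $1$-forms with integral periods. Since $F_M=\AJ_M^{-1}(\pt)$ for a regular point, its Poincar\'e dual is $\alpha\cupr\beta$, and the hypothesis $[F_M]=0$ yields a $1$-form $\eta$ with $d\eta=\alpha\wedge\beta$. Stokes's theorem identifies $\eta$ as the cochain-level Poincar\'e dual of a bounding chain $X$ satisfying $\partial X=F_M$: for every closed $(n-1)$-form $\tau$ one has $\int_X d\tau=\int_{F_M}\tau=\int_M d\eta\wedge\tau$.

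Next, I would compute $\mathrm{PD}(\ell_M(F_M,F_M))$ by choosing a nearby regular fiber $F_M'$ disjoint from $F_M$ and transverse to $X$. Since $F_M'$ also has Poincar\'e dual $\alpha\wedge\beta$, the standard product formula for Poincar\'e duals of transverse intersections gives $\mathrm{PD}(X\cap F_M')=\pm[\eta\wedge\alpha\wedge\beta]$; note that this form is closed precisely because $\alpha\wedge\alpha=0$, which is exactly the condition ensuring $\partial(X\cap F_M')=\empt$ (as $F_M$ and $F_M'$ are disjoint).

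For the Massey side, since $\alpha\wedge\alpha=0$ at the form level, the defining primitives may be taken as $u=0$ (for $\alpha\wedge\alpha$) and $v=\eta$ (for $\alpha\wedge\beta$), so the standard cochain representative of $\la\alpha,\alpha,\beta\ra$ collapses to $\pm[\alpha\wedge\eta]\in H^2(M)$. Cupping with $\beta$ yields $\pm[\alpha\wedge\eta\wedge\beta]=\pm[\eta\wedge\alpha\wedge\beta]$, matching $\mathrm{PD}(\ell_M(F_M,F_M))$ up to sign, which is the claim.

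The main obstacle is the sign bookkeeping---pinning down the sign in the transverse-intersection formula for Poincar\'e duals and in the standard Massey cochain representative, both of which depend on degree conventions. A secondary point is the justification of working in the de Rham model: $\alpha\cupr\alpha$ is only $2$-torsion over $\Z$ and so the Massey product need not be defined integrally, but since \theoref{t:massey} applies the lemma only to detect nonvanishing of $\la\alpha,\alpha,\beta\ra\cupr\beta$, working with real coefficients (or with the pullback of a rational representative) is sufficient.
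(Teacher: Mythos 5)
Your computation is correct in substance, but it runs the argument in the de Rham model, whereas the paper runs the Poincar\'e-dual, geometric version. There, $\alpha$ and $\beta$ are realized as hypersurfaces $A=\AJ_M^{-1}(u)$, $B=\AJ_M^{-1}(v)$ with $F_M=A\cap B$; the role of your pointwise identity $\alpha\wedge\alpha=0$ is played by the disjointness $A'\cap A=\emptyset$ of preimages of parallel loops, which makes $A'\cap X$ a cycle dual to $\la\alpha,\alpha,\beta\ra$, and a further intersection with $B'$ gives the claim. What the geometric route buys is that everything happens at the level of integral chains, so the duality is an identity in $H^3(M;\Z)$; your de Rham version only gives the statement over $\R$, and hence in \theoref{t:massey} would only detect the rationally nontrivial part of $\la\alpha,\alpha,\beta\ra\cupr\beta$. (Your worry about $\alpha\cupr\alpha$ being merely $2$-torsion is moot here: $\alpha$ is pulled back from a circle factor of $\T^2$, so $\alpha\cupr\alpha=0$ integrally, and one can choose a cochain representative $a$ with $a\cupr a=0$, which is exactly how the paper keeps the Massey product integral.) What your route buys is a transparent treatment of the Massey product itself: with the primitives $u=0$, $v=\eta$ the triple product collapses to $\pm[\alpha\wedge\eta]$ with no transversality to arrange, and the independence of $[\eta\wedge\alpha\wedge\beta]$ from the choice of $\eta$ is immediate (for closed $c$, the form $c\wedge\alpha\wedge\beta=c\wedge d\eta$ is exact), which also makes the zero-indeterminacy claim visible.

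One step does not do the work you assign to it: for closed $\tau$ the displayed identity $\int_X d\tau=\int_{F_M}\tau=\int_M d\eta\wedge\tau$ reads $0=0=0$ and identifies nothing. What you actually need is $\int_{X}\tau=\int_M\eta\wedge\tau$ for closed $(n-1)$-forms $\tau$, i.e., that the degree-one current of integration over $X$ agrees with $\eta$ up to a closed correction; this holds only for a suitably adjusted $\eta$, since the two objects have cohomologous rather than equal differentials. The gap is harmless, because both sides of your final identity are independent of the choices involved: replacing $X$ by $X+Z$ for an $(n-1)$-cycle $Z$ changes $X\cap F'$ by $Z\cap F'$, whose class vanishes since $F'$ is dual to $\alpha\cupr\beta=0$, and $[\eta\wedge\alpha\wedge\beta]$ is $\eta$-independent as noted above. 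But you should either make this choice-independence explicit or replace the Stokes argument by the paper's transverse-intersection identification of $X$ with the Massey primitive.
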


\begin{proof}
The classes~$\alpha, \beta$ are Poincar\'e dual to hypersurfaces~$A, B
\subset M$ obtained as the inverse images under~$\AJ_M$ of a
pair~$\{u,v\}$ of loops defining a generating set for~$H_1(\T^2)$.
The hypersurfaces can be constructed as inverse images of a regular
point under a projection~$M\to S^1$ to one of the summand circles.
Clearly, the intersection~$A\cap B \subset M$ is a typical fiber
\begin{equation*}
F_M=A\cap B
\end{equation*}
of the Abel--Jacobi map (namely, inverse image of the point~$u\cap
v\in \T^2$). Then another regular fiber~$F'$ can be represented
as~$A'\cap B'$ where, say, the set~$A'$ is the inverse image of a
loop~$u'$ ``parallel'' to~$u$. Then~$A'\cap X$ is a cycle, since
\begin{equation*}
\partial (A'\cap X)=A'\cap A \cap B=\emptyset.
\end{equation*}
Moreover, it is easy to see that the homology class~$[A'\cap X]$ is
dual to the Massey product~$\la \ga,\ga, \gb\ra$, by taking a
representative~$a$ of~$\ga$ such that~$a\cupr a=0$.  Now,
since~$F'=A'\cap B'$, we conclude that~$[F'\cap X]$ is dual, up to
sign, to~$\la \ga,\ga,\gb\ra\cupr \gb$.
\end{proof}

\begin{remark}
In the case of~$3$-manifolds with first Betti number~$2$, the
non-vanishing of the self-linking number is equivalent to the
non-vanishing of C.~Lescop's generalization~$\lambda$ of the
Casson-Walker invariant, cf.~\cite{Les}.  See T.~Cochran and
J.~Masters~\cite{CM} for generalizations.
\end{remark}

Now \theoref{t:massey} will follow from \theoref{t:self} below.

\begin{theorem}\label{t:self}
Assume~$b_1(M^n)=2$. The non-triviality of the self-linking class
in~$H_{n-3}(M)$ implies the bound~$\syscat(M)\geq 3$.
\end{theorem}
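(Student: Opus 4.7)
The plan is to reduce Theorem~\ref{t:self} to Proposition~\ref{p:fm} via the following Key Lemma: under the hypothesis that $\ell_M(F_M,F_M) \in H_{n-3}(M)$ is nonzero, the lifted typical fiber $\ov F_M \subset \ov M$ represents a nonzero class in $H_{n-2}(\ov M;\Z)$, where $\ov M$ denotes the maximal free abelian cover of $M$. Granted this, Proposition~\ref{p:fm} immediately yields $\syscat(M) \geq b_1(M) + 1 = 3$, which is the desired bound; concretely it produces the categorical partition $n = 1+1+(n-2)$ via $\stsys_1(M)^2 \cdot \sys_{n-2}(M) \le C(M)\,\vol_n(M)$.

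To prove the Key Lemma I would argue by contrapositive. Assume $[\ov F_M] = 0$ in $H_{n-2}(\ov M;\Z)$, so there exists a compactly supported $(n-1)$-chain $\wt X$ in $\ov M$ with $\partial \wt X = \ov F_M$. One may take $\wt X$ compactly supported because $\ov F_M$ itself is compact: since $F_M$ maps to a point under $\AJ_M$, the composition $\pi_1(F_M) \to \pi_1(M) \to \Z^2$ is trivial, hence $p^{-1}(F_M)$ is a trivial cover of $F_M$, a disjoint union of compact copies. The projection $X := p_*(\wt X)$ then satisfies $\partial X = F_M$, so this $X$ is admissible in the definition of $\ell_M(F_M, F_M) = [X \cap F'_M]$ for a nearby generic fiber $F'_M$. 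Lifting the intersection to $\ov M$,
\begin{equation*}
X \cap F'_M \;=\; \sum_{\gamma \in S} p_*\bigl(\wt X \cap \gamma\,\wt{F}'_M\bigr),
\end{equation*}
where $\wt F'_M$ is a chosen lift of $F'_M$ and $S \subset \Z^2$ is finite by compactness of $\wt X$. Each lift $\gamma\,\wt{F}'_M$ also bounds a compact chain $\wt Y_\gamma$ in $\ov M$ (it is a deck-translate of $\ov F_M$ adjusted by a short fiber-bordism), and the Leibniz rule for the geometric intersection product gives
\begin{equation*}
\wt X \cap \gamma\,\wt{F}'_M \;=\; \wt X \cap \partial \wt Y_\gamma \;=\; \partial(\wt X \cap \wt Y_\gamma) \;\pm\; \ov F_M \cap \wt Y_\gamma.
\end{equation*}
The first term pushes down to a boundary in $M$. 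The residual sum $\sum_\gamma p_*(\ov F_M \cap \wt Y_\gamma)$ should then assemble, via $\Z^2$-equivariance of the chains $\wt Y_\gamma$ together with the symmetry $(F_M, F'_M) \leftrightarrow (F'_M, F_M)$ of the self-linking pairing, into a single push-forward of a boundary chain in $M$, so that $\ell_M(F_M,F_M) = 0$ in $H_{n-3}(M)$, contrary to the hypothesis.

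The main obstacle is precisely this last step: confirming that the residual sum vanishes in $H_{n-3}(M)$. The difficulty is that two null-homologous cycles in $\ov M$ can still carry a nontrivial mutual linking class (just as two null-homologous circles in $\R^3$ may be Hopf-linked), so no individual summand need be a boundary; the cancellation must come from the aggregate. The conceptual guide is that the self-linking class is Poincar\'e dual to the Massey product $\la\alpha,\alpha,\beta\ra \cupr \beta$, a secondary invariant whose non-vanishing is obstructed precisely when the primary class $[\ov F_M]$ is trivial in the cover. Making this rigorous -- choosing the bounding chains $\wt Y_\gamma$ $\Z^2$-equivariantly, tracking signs, and organizing the swap symmetry -- is where the bulk of the technical work lies.
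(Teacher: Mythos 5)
Your reduction is the same as the paper's: Theorem~\ref{t:self} is deduced from the statement (the paper's Proposition~\ref{propal}) that non-vanishing of the self-linking class forces the lifted fiber class to be nonzero in the maximal free abelian cover $\manbar$, after which Proposition~\ref{p:fm} gives $\syscat(M)\ge b_1(M)+1=3$. Your contrapositive set-up is also the right one. But the step you flag as ``the main obstacle'' is a genuine gap, not a technicality: bounding each translate $\gamma\,\wt F'_M$ by a \emph{compact} chain $\wt Y_\gamma$ and hoping the residual terms $\sum_\gamma p_*\bigl(\ov F_M\cap \wt Y_\gamma\bigr)$ cancel ``by equivariance and symmetry'' does not go through. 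As you yourself observe, two null-homologous cycles in $\manbar$ can link nontrivially, so the individual residual terms are exactly the linking data you are trying to show vanishes; there is no reason for the aggregate to be a boundary, and no equivariant choice of compact $\wt Y_\gamma$ is available (the translates $\gamma\,\wt Y_\gamma$ of a single choice would give an infinite, non-locally-finite sum). The argument as written is circular at this point.

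The missing idea (due to A.~Marin, via \cite{KL}) is to bound the \emph{second} fiber not by a compact chain but by a non-compact one that is disjoint from everything in sight. Choose points $x,y\in\R^2$ with distinct images in $\T^2$ and a properly imbedded ray $r_y\subset\R^2$ from $y$ to infinity avoiding $x$ and all of its $\Z^2$-translates; set $S=\ov{\AJ}_M^{-1}(r_y)$, so that $\partial S=\ov F_y$ and, crucially, $S\cap T_g\ov F_x=\emptyset$ for every deck transformation $T_g$. Then with $\surface$ a compact chain bounding $\ov F_x$ (or a multiple), the Leibniz rule gives
\begin{equation*}
\partial\bigl(T_g\surface\cap S\bigr)=\bigl(T_g\ov F_x\cap S\bigr)\pm\bigl(T_g\surface\cap \ov F_y\bigr)=\pm\, T_g\surface\cap \ov F_y ,
\end{equation*}
with \emph{no} residual term, because the first intersection is empty on the nose. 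Hence
\begin{equation*}
p(\surface)\cap F_y=\sum_{g}T_g\surface\cap\ov F_y=\pm\sum_{g}\partial\bigl(T_g\surface\cap S\bigr),
\end{equation*}
a finite sum of boundaries (finite by compactness of $\surface$), so $\ell_M(F_x,F_y)=0$. This single geometric choice is what replaces the unproven cancellation in your proposal; without it, or an equivalent device, the proof is incomplete.
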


The theorem is immediate from the proposition below.  If the fiber
class in~$M$ of the Abel--Jacobi map vanishes, one can define the
self-linking class of a typical fiber, and proceed as follows.

\begin{prop}
\label{propal}
The non-vanishing of the self-linking of a typical fiber
$\AJ_M^{-1}(p)$ of~$\AJ_M: M \to \T^2$ is a sufficient condition for
the non-vanishing of the fiber class~$\fmanifold$ in the maximal free
abelian cover~$\manbar$ of~$M$.
\end{prop}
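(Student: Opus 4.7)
I argue the contrapositive: assuming $\fmanifold = 0$ in $H_{n-2}(\manbar)$, I will show that the self-linking class in $H_{n-3}(M)$ is trivial.

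By the vanishing hypothesis, there is an $(n-1)$-chain $\overline{X}$ in $\manbar$ with $\partial \overline{X} = \overline{F}_M$, where $\overline{F}_M = \overline{\AJ}_M^{-1}(\tilde{p})$ for a chosen lift $\tilde{p} \in \R^2$ of $p$. The covering map $p: \manbar \to M$ restricts to a homeomorphism on $\overline{F}_M$, so the pushforward $X := p_*(\overline{X})$ is an $(n-1)$-chain in $M$ with $\partial X = F_M$. By definition, this $X$ represents the self-linking class via $\ell_M(F_M, F_M) = [X \cap F']$, where $F' = \AJ_M^{-1}(q)$ is another typical fiber chosen with $q \in \T^2$ generic.

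To analyze this intersection I use the decomposition $p^{-1}(F') = \bigsqcup_{g \in \Z^2} T_g \overline{F}'$, where $T_g$ denotes the deck transformation by $g \in \Z^2$ and $\overline{F}' := \overline{\AJ}_M^{-1}(\tilde{q})$ for some lift $\tilde{q}$ of $q$. By compactness of $\overline{X}$ and genericity of $q$, only finitely many of the translates $T_g \overline{F}'$ meet $\overline{X}$, and each intersection $\overline{X} \cap T_g \overline{F}'$ is a cycle since $\partial \overline{X} = \overline{F}_M$ lies over $\tilde{p}$ and is disjoint from $p^{-1}(F')$. Hence $X \cap F' = p_*(\overline{X} \cap p^{-1}(F'))$ is the pushforward of a compactly supported cycle in $\manbar$.

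The crucial step is to exhibit $X \cap F'$ as a boundary in $M$. The key geometric input is the $(n-1)$-chain $\overline{S} := \overline{\AJ}_M^{-1}(\tilde{\gamma})$, where $\tilde{\gamma} \subset \R^2$ is a path from $\tilde{p}$ to $\tilde{q}$; it satisfies $\partial \overline{S} = \overline{F}' - \overline{F}_M$, so that $\overline{X} + \overline{S}$ bounds the single lift $\overline{F}'$ in $\manbar$. Projecting to $M$ and combining this filling with the finitely many nontrivial-translate contributions $\overline{X} \cap T_g \overline{F}'$ (for $g \ne 0$), one aims to construct an explicit $(n-2)$-chain in $M$ whose boundary is $X \cap F'$. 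The main obstacle is precisely the nontrivial-translate contribution: each $T_g \overline{F}'$ with $g \neq 0$ projects to the same fiber $F'$ in $M$ but contributes a distinct piece to $X \cap F'$, and these pieces must be assembled into a coherent global boundary in $M$ by exploiting the $\Z^2$-equivariant structure of $\manbar$ and the fact that $\overline{F}_M$ bounds in the cover.
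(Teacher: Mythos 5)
Your setup coincides with the paper's: you argue the contrapositive, fill the lifted fiber by a chain $\overline{X}$ in $\manbar$, decompose the preimage of the second fiber into deck translates, and get finiteness from compactness. But the proof does not close. At the decisive step you write that one ``aims to construct'' a bounding chain and you yourself flag the nontrivial-translate contributions as ``the main obstacle''; that obstacle is precisely what still has to be overcome, and the tool you propose does not overcome it. Your chain $\overline{S}=\overline{\AJ}_M^{-1}(\tilde\gamma)$, built from a \emph{compact} path $\tilde\gamma$ from $\tilde p$ to $\tilde q$, has $\partial\overline{S}=\overline{F}'-\overline{F}_M$, so even after arranging $\tilde\gamma$ to miss the nontrivial translates of $\tilde p$, the boundary identity it yields for $g\neq 0$ is
\[
\partial\bigl(T_g\overline{X}\cap\overline{S}\bigr)=\pm\, T_g\overline{X}\cap\overline{F}'\ \mp\ T_g\overline{X}\cap\overline{F}_M .
\]
This only shows that the $g$-th piece of the self-linking cycle is \emph{homologous} to $T_g\overline{X}\cap\overline{F}_M$; the unwanted second term never disappears, and no concrete ``$\Z^2$-equivariant assembly'' is supplied to cancel it. As written, the argument stops exactly where the real work begins.

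The missing idea --- the heart of the paper's proof, attributed to A.~Marin --- is to fill the second fiber by a \emph{non-compact} hypersurface: choose a properly imbedded ray $r_y\subset\R^2$ from $y$ to infinity that avoids $x$ and \emph{all} of its $\Z^2$-translates, and set $S=\overline{\AJ}^{-1}(r_y)$. Then $\partial S=\ov F_y$ has no second boundary component, and $S$ is disjoint from every translate $T_g\ov F_x$. Consequently, with $\surface$ the compact filling of $\ov F_x$,
\[
T_g\surface\cap\ov F_y=\pm\,\partial\bigl(T_g\surface\cap S\bigr)
\]
for every $g$: each translate contribution bounds individually (and $T_g\surface\cap S$ is compact because $\surface$ is), so the finite sum over $g$ exhibits the self-linking cycle as a boundary. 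Without the ray-to-infinity device, or an equivalent one, your contrapositive is not established.
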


\begin{proof}
The argument is modeled on the one found in \cite{KL} in the case
of~$3$-manifolds, and due to A. Marin (see also
\cite[p.~165-166]{SGT}).  Consider the pullback diagram
\begin{equation*}
\CD \manbar @>\ov{\AJ}_M >> \R^2\\ @VpVV @VVV\\ M @>\AJ_M >> \T^2
\endCD
\end{equation*}
where~$\AJ_M$ is the Abel--Jacobi map and the right-hand map is the
universal cover of the torus.  Choose points~$x,y\in \R^2$ with
distinct images in~$\T^2$.  Let~$\ov F_x=\overline{\AJ}_M^{-1}(x)$
and~$\ov F_y= \overline{\AJ}_M^{-1}(y)$ be lifts of the corresponding
fibers~$F_x, F_y \subset M$.  Choose a properly imbedded
ray~$r_y\subset \R^2$ joining the point~$y\in \R^2$ to infinity while
avoiding~$x$ (as well as its~$\Z^2$-translates), and consider the
complete hypersurface
\begin{equation*}
S = \overline{\AJ}^{-1}(r_y) \subset \manbar
\end{equation*}
with~$\partial S = \ov F_y$.  We have~$S\cap T_g \ov F_x = \emptyset$
for all~$g\in G$, where~$G\cong\Z^2$ denotes the group of deck
transformations of the covering~$p: \manbar \to M$ and~$T_g$ is the
deck transformation given by~$g$.

We will prove the contrapositive.  Namely, the vanishing of the class
of the lift of the fiber implies the vanishing of the self-linking
class.  If the surface~$\ov F_x$ is zero-homologous in~$\manbar$, we
can choose a compact hypersurface~$\surface \subset \manbar$ with
\begin{equation*}
\partial \surface = \ov F_x
\end{equation*}
(if there is no such hypersurface for~$\ov F_x$, we work with a
sufficiently high multiple~$N F_x$, see \cite{KL} for details).
The~$(n-3)$-dimensional homology class~$\ell_M(F_x, F_y)$ in~$M$ can
therefore be represented by the~$(n-3)$-dimensional cycle given by the
oriented intersection~$p(\surface) \cap F_y$.  Now we have
\begin{equation}
\label{1321}
\begin{aligned}
p(\surface) \cap F_y= \sum _{g\in G} T_g \surface \cap \ov F_y =
\sum _{g\in G} \partial \left( T_g \surface \cap S \right).
\end{aligned}
\end{equation}
But the last sum is a finite sum of boundaries, and hence represents
the zero homology class.  The finiteness of the sum follows from the
fact that the first sum contains only finitely many non-zero summands,
due to the compactness of~$\surface$.
\end{proof}

To summarize, if the lift of a typical fiber to the maximal free
abelian covering of~$M^n$ with~$b_1(M)=2$ defines a nonzero class, then
one obtains the lower bound~$\syscat(M)\geq 3$, due to the existence
of a suitable systolic inequality corresponding to the partition
\begin{equation*}
n=1+1+(n-2)
\end{equation*}
as in \eqref{eq:main}, by applying Gromov's \theoref{t:abcover}.


\end{document}